\newtheorem{theorem}{Theorem}[section]
\newtheorem{corollary}[theorem]{Corollary}
\newtheorem{lemma}[theorem]{Lemma}
\newtheorem{remark}[theorem]{Remark}
\journal{Journal of Differential Equations}
\begin{document}

\begin{frontmatter}
\title{Blow-up and global existence for a general class of nonlocal nonlinear coupled wave equations}
\author{N. Duruk$^1$}
\ead{nilayduruk@su.sabanciuniv.edu}
\author{H. A. Erbay$^2$\corref{cor1}}
\ead{erbay@isikun.edu.tr}
\author{A. Erkip$^1$}
\ead{albert@sabanciuniv.edu}
\cortext[cor1]{Corresponding author. Tel:
+90 216 528 7115 Fax: +90 216 712 1474}

\address{$^1$ Faculty of Engineering and Natural Sciences, Sabanci University,
        Tuzla 34956,  Istanbul,    Turkey}

\address{$^2$ Department of Mathematics, Isik  University, Sile 34980, Istanbul, Turkey}

\begin{abstract}
We study the initial-value problem for a general class of nonlinear nonlocal coupled wave equations.
The problem involves convolution operators  with kernel functions whose Fourier transforms are
nonnegative. Some well-known examples of nonlinear wave equations, such as coupled Boussinesq-type equations
arising in elasticity and in quasi-continuum approximation of dense lattices,
follow from the present model for suitable choices of the kernel functions.  We establish local existence and
sufficient conditions for finite time blow-up and as well as global existence of solutions of the problem.
\end{abstract}

 \begin{keyword}
 Nonlocal Cauchy problem \sep Boussinesq equation \sep Global existence \sep Blow-up \sep Nonlocal elasticity.
 \MSC  74H20 \sep 74J30 \sep 74B20
 \end{keyword}
 \end{frontmatter}

\setcounter{equation}{0}
\section{Introduction}
\noindent
In this article we focus on blow-up and global existence of solutions  to the nonlocal nonlinear Cauchy problem
\begin{eqnarray}
    && u_{1tt}=(\beta_{1} \ast (u_{1}+g_{1}(u_{1},u_{2})))_{xx}, ~~~~~x\in {\Bbb R}, ~~~~t>0\label{cp1} \\
    && u_{2tt}=(\beta_{2} \ast (u_{2}+g_{2}(u_{1},u_{2})))_{xx},  ~~~~~x\in {\Bbb R}, ~~~~t>0 \label{cp2}\\
    && u_{1}(x,0)=\varphi_{1}(x),~~~u_{1t}(x,0)=\psi_{1}(x)\label{iv1}\\
    && u_{2}(x,0)=\varphi_{2}(x),~~~u_{2t}(x,0)=\psi_{2}(x)\label{iv2}.
\end{eqnarray}
Here $u_{i}=u_{i}(x,t)$ $(i=1,2)$, the subscripts $x, t$ denote partial derivatives, the symbol $~\ast$
denotes convolution in the spatial domain
\begin{displaymath}
    \beta \ast v= \int_{\Bbb R} \beta(x-y)v(y)\mbox{d}y.
\end{displaymath}
We assume that the nonlinear functions $g_{i}(u_{1},u_{2})$ $(i=1,2)$
satisfy the exactness condition
\begin{equation}
    {{\partial g_{1}}\over {\partial u_{2}}}={{\partial g_{2}}\over {\partial u_{1}}} \label{exact}
\end{equation}
or equivalently there exists a function $G(u_{1},u_{2})$ satisfying
\begin{equation}
    g_{i}={{\partial G}\over {\partial u_{i}}} ~~~~~(i=1,2). \label{gg}
\end{equation}
We assume that the kernel functions $\beta_{i}(x)$  are integrable and their Fourier transforms $\widehat{\beta_{i}}(\xi)$
satisfy
\begin{equation}
    0\leq \widehat{\beta_{i}}(\xi)\leq C_{i}(1+\xi^2)^{-{r_{i}/2}}~~\mbox{for all}~\xi~~~~(i=1,2)
\end{equation}
for  some constants $C_{i}>0$. Here the exponents $r_{1},r_{2}$ are not necessarily integers.

Equations (\ref{cp1})-(\ref{cp2}) may be viewed as a natural generalization of the single equation arising in
one-dimensional nonlocal elasticity \cite{duruk1} to a coupled system of two nonlocal nonlinear equations.
As a special case, consider  $g_{i}(u_{1},u_{2})=u_{i}W^{\prime}(u_{1}^{2}+u_{2}^{2})~~~(i=1,2)$ where $W$
is a function of $u_{1}^{2}+u_{2}^{2}$ alone and the symbol $~^\prime$ denotes the derivative. Then,
(\ref{cp1})-(\ref{cp2}) may be thought of the system governing
the one-dimensional propagation of two "pure" transverse nonlinear waves in a nonlocal elastic isotropic
homogeneous medium \cite{duruk2}. Note that this choice of $g_{1}$ and $g_{2}$ will satisfy the exactness
condition (\ref{exact}) with $G(u_{1},u_{2})={1\over 2}W(u_{1}^{2}+u_{2}^{2})$. From the modelling point of view
we want to remark that, in general, the system will also contain a third equation characterizing the propagation
of a longitudinal wave. Nevertheless, with some further restrictions imposed on the form of $W$, one may
get transverse waves without a coupled longitudinal wave \cite{haddow}. We also want to note that, in the general case,
the exactness condition (\ref{exact}) is necessary in order to obtain the conservation law of Lemma \ref{energy}.

For suitable choices of the kernel functions, the system (\ref{cp1})-(\ref{cp2}) reduces to some well-known coupled
systems of nonlinear wave equations. To illustrate this we consider the exponential kernel
$\beta_{1}(x)=\beta_{2}(x)={\frac{1}{2}}e^{-|x|}$ which is the Green's function for the operator  $~1-D_{x}^{2}~$
where $~D_{x}~$ stands for the partial derivative with respect to $x$. Then, applying the operator $~1-D_{x}^{2}~$ to
both sides of equations (\ref{cp1})-(\ref{cp2}) yields the coupled improved Boussinesq equations
\begin{eqnarray}
   && u_{1tt}-u_{1xx}-u_{1xxtt}=(g_{1}(u_{1},u_{2}))_{xx}  \label{cibe1}  \\
   && u_{2tt}-u_{2xx}-u_{2xxtt}=(g_{2}(u_{1},u_{2}))_{xx}.  \label{cibe2}
\end{eqnarray}
Similarly, if the kernels $\beta_{1}(x)$ and $\beta_{2}(x)$ are chosen as the Green's function for the fourth-order
operator $~1-aD_{x}^{2}+bD_{x}^{4}~$ with positive constants $a,b$, then (\ref{cp1})-(\ref{cp2})
reduces to  the coupled higher-order Boussinesq system
\begin{eqnarray}
    && u_{1tt}-u_{1xx}-au_{1xxtt}+bu_{1xxxxtt}=(g_{1}(u_{1},u_{2}))_{xx} \label{chbe1} \\
    && u_{2tt}-u_{2xx}-au_{2xxtt}+bu_{2xxxxtt}=(g_{2}(u_{1},u_{2}))_{xx}. \label{chbe2}
\end{eqnarray}
These examples make it obvious that choosing the kernels $\beta_{i}(x)$ in (\ref{cp1})-(\ref{cp2}) as the
Green's functions of constant coefficient linear differential operators in $x$ will yield similar coupled systems
describing the bi-directional propagation of nonlinear waves in dispersive media.  Different examples of the kernel
functions used in the literature can be found in \cite{duruk1} where such kernels will give not only differential
equations but also integro-differential equations or difference-differential equations. For a survey of Korteweg-de Vries
type nonlinear nonlocal equations of hydrodynamic relevance we refer to \cite{constantin1}.

The coupled improved Boussinesq system (\ref{cibe1})-(\ref{cibe2}) has been derived to describe bi-directional wave
propagation in various contexts, for instance, in a Toda lattice model with a transversal degree of freedom
\cite{christiansen}, in a two-layered lattice model \cite{khusnutdinova} and in a diatomic lattice \cite{wattis}.
For a discussion of the classical Boussinesq system we refer to  \cite{alvarez, constantin2}.
The Cauchy problem for (\ref{cibe1})-(\ref{cibe2}) has been studied in \cite{godefroy} and recently in \cite{wang}
where both assume the exactness condition (\ref{exact}). They have established the conditions for the global existence
and finite-time blow-up of solutions in Sobolev spaces $H^{s}\times H^{s}$ for $s>1/2$.

The single component form of equations (\ref{chbe1})-(\ref{chbe2}) arises as a model for a dense  chain of particles with
elastic couplings \cite{rosenau}, for water waves with surface tension \cite{schneider} and for longitudinal waves in a
nonlocal nonlinear elastic medium \cite{duruk2}. We have proved in \cite{duruk2} that the Cauchy problem for the single
component form of (\ref{chbe1})-(\ref{chbe2}) is globally well-posed in Sobolev spaces $H^{s}$ for $s>1/2$ under certain
conditions on nonlinear term and initial data. To the best of our knowledge, the questions of global well-posedness
and finite-time blow-up of solutions for the coupled higher-order Boussinesq system (\ref{chbe1})-(\ref{chbe2}) are open
problems. In this article we shall resolve these problems by considering a closely related, but somewhat more general,
problem defined by (\ref{cp1})-(\ref{iv2})

In Section 2 we present a local existence theory   of the Cauchy problem (\ref{cp1})-(\ref{iv2}) for the case of
general kernels with $r_{1}, r_{2}\geq 2$ and initial data in  suitable Sobolev spaces.
In Section 3 we prove the energy identity and in Section 4 we discuss finite time blow-up of solutions of the
initial-value problem. Finally, in Section 5 we prove two separate results on global existence of solutions of
(\ref{cp1})-(\ref{iv2}) for two different classes of kernel functions.

In what follows $H^{s}=H^{s}({\Bbb R})$ will denote the $L^{2}$ Sobolev space on ${\Bbb R}$. For the $H^{s}$ norm
we use the Fourier transform representation
$~\left\Vert u \right\Vert _{s}^{2}=\int_{\Bbb R} (1+\xi^{2})^{s}|\hat{u}(\xi)|^{2} \mbox{d}\xi~$.
We use $~\left\Vert u \right\Vert _{\infty}~$, $~\left\Vert u \right\Vert~$
and $~\left\langle u, v\right\rangle~$ to denote the $L^{\infty}$ and $L^{2}$ norms and the inner product in $L^{2}$,
respectively.

\setcounter{equation}{0}
\section{Local Well Posedness}
\noindent
To shorten the notation  we write $f_{i}( u_{1},u_{2}) =u_{i}+g_{i}(u_{1},u_{2})$ $~(i=1,2)$.
Note that
\begin{equation}
    f_{i}={{\partial F}\over {\partial u_{i}}} ~~~~~(i=1,2)
\end{equation}
where $F(u_{1},u_{2})={1\over 2} (u_{1}^{2}+u_{2}^{2})+G(u_{1},u_{2})$.

For a vector function $U=(u_{1},u_{2})$ we define the norms
$~\left\Vert U \right\Vert_{s}=\left\Vert u_{1} \right\Vert_{s}+\left\Vert u_{2} \right\Vert_{s}~$ and
$~\left\Vert U \right\Vert_{\infty}=\left\Vert u_{1} \right\Vert_{\infty}+\left\Vert u_{2} \right\Vert_{\infty}$.
We first need vector-valued versions of  Lemma 3.1 and Lemma 3.2 in \cite{duruk1} (see also  \cite{wang,constantin3,runst}),
which concern the behavior of the nonlinear terms:
\begin{lemma}\label{lem1}
    Let $s\geq 0,$ $h\in C^{[s]+1}({\Bbb R}^{2})$ with $h(0)=0$. Then for any
    $U=(u_{1},u_{2})\in (H^{s}\cap L^{\infty })^{2}$, we have
    $h(U) \in H^{s}\cap L^{\infty}$. Moreover there is some constant $A(M)$ depending on $M$ such that for
    all $U\in (H^{s}\cap L^{\infty })^{2}$ with $\left\Vert U\right\Vert _{\infty }\leq M$
    \begin{equation*}
        {\left\Vert h(U)\right\Vert }_{s}\leq A(M){\left\Vert U\right\Vert}_{s}~.
    \end{equation*}
\end{lemma}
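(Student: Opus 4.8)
The plan is to transfer, almost verbatim, the proof of the scalar composition (Moser‑type) estimate of \cite{duruk1} (see also \cite{wang,runst,constantin3}): the only structural change is that the chain rule applied to $h(U)$ with $U=(u_{1},u_{2})$ produces partial‑derivative terms $\partial^{\gamma}h$ rather than ordinary derivatives of a function of one variable, while every analytic inequality used below holds componentwise. First I dispose of the $L^{\infty}$ and $L^{2}$ bounds: since $h(0)=0$ and $|U(x)|\leq\left\Vert U\right\Vert_{\infty}\leq M$, the mean value theorem gives the pointwise estimate $|h(U(x))|\leq C(M)\,|U(x)|$ with $C(M)=\sup_{|\zeta|\leq M}|\nabla h(\zeta)|$, whence $\left\Vert h(U)\right\Vert_{\infty}\leq C(M)\left\Vert U\right\Vert_{\infty}$ and $\left\Vert h(U)\right\Vert\leq C(M)\left\Vert U\right\Vert\leq C(M)\left\Vert U\right\Vert_{s}$. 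For $0\leq s<1$ this already suffices: with the equivalent Gagliardo norm $\left\Vert v\right\Vert_{s}^{2}\sim\left\Vert v\right\Vert^{2}+\iint|v(x)-v(y)|^{2}|x-y|^{-1-2s}\,dx\,dy$ and the mean value bound $|h(U(x))-h(U(y))|\leq C(M)\bigl(|u_{1}(x)-u_{1}(y)|+|u_{2}(x)-u_{2}(y)|\bigr)$ one obtains at once $\left\Vert h(U)\right\Vert_{s}\leq C(M)\left\Vert U\right\Vert_{s}$.

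For $s\geq 1$ write $s=m+\sigma$ with $m=[s]\geq 1$ and $\sigma\in[0,1)$, and use $\left\Vert v\right\Vert_{s}^{2}\sim\left\Vert v\right\Vert_{\sigma}^{2}+\left\Vert D^{m}v\right\Vert_{\sigma}^{2}$. By the Fa\`a di Bruno formula $D^{m}\bigl(h(U)\bigr)$ is a finite sum of terms of the form
\begin{equation*}
    \bigl(\partial^{\gamma}h\bigr)(U)\prod_{j=1}^{k}D^{a_{j}}u_{i_{j}}, \qquad a_{j}\geq 1, \quad \sum_{j=1}^{k}a_{j}=m, \quad |\gamma|=k, \quad i_{j}\in\{1,2\},
\end{equation*}
and each coefficient $(\partial^{\gamma}h)(U)$ is bounded in $L^{\infty}$ by some $C(M)$ since $\partial^{\gamma}h$ is continuous and $|U|\leq M$. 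For integer $s$, i.e. $\sigma=0$, I estimate the $L^{2}$ norm of such a product by H\"older's inequality with exponents $p_{j}=2m/a_{j}$ (so that $\sum_{j}p_{j}^{-1}=1/2$) followed by the one‑dimensional Gagliardo--Nirenberg inequalities $\left\Vert D^{a_{j}}u_{i_{j}}\right\Vert_{L^{p_{j}}}\leq C\left\Vert u_{i_{j}}\right\Vert_{\infty}^{1-a_{j}/m}\left\Vert u_{i_{j}}\right\Vert_{m}^{a_{j}/m}$; multiplying and using $\sum_{j}a_{j}/m=1$ bounds each term by $C(M)\left\Vert U\right\Vert_{\infty}^{k-1}\left\Vert U\right\Vert_{m}\leq C(M)\left\Vert U\right\Vert_{s}$, hence $\left\Vert D^{m}(h(U))\right\Vert\leq C(M)\left\Vert U\right\Vert_{s}$, which with the $L^{2}$ bound and $\left\Vert v\right\Vert_{m}^{2}\sim\left\Vert v\right\Vert^{2}+\left\Vert D^{m}v\right\Vert^{2}$ settles the integer case.

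The fractional remainder $0<\sigma<1$ is the main obstacle: one must control the $H^{\sigma}$ norm of each Fa\`a di Bruno term although the factors $D^{a_{j}}u_{i_{j}}$ need not be bounded, and a direct interpolation is unavailable because the hypothesis $U\in H^{s}$ is strictly weaker than $U\in H^{m+1}$. This is handled in the same spirit as the integer case, using the fractional Leibniz (Kato--Ponce) rule to distribute $D^{\sigma}$ over the product and a fractional chain rule to bound $D^{\sigma}\bigl((\partial^{\gamma}h)(U)\bigr)$ (this is where the full regularity $h\in C^{[s]+1}$ enters), together with Gagliardo--Nirenberg inequalities in the relevant $L^{p}$ spaces; the exponents close consistently precisely because, after placing the extra $D^{\sigma}$ on one factor, the orders of differentiation still add up to $m+\sigma=s$, so that each term is again bounded by $C(M)\left\Vert U\right\Vert_{s}$. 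As this bookkeeping is identical to the one‑component case, I would simply invoke \cite{duruk1} (and \cite{wang,runst}) for the details. Collecting the three cases gives $h(U)\in H^{s}\cap L^{\infty}$ with $\left\Vert h(U)\right\Vert_{s}\leq A(M)\left\Vert U\right\Vert_{s}$, as claimed.
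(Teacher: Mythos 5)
Your proposal is correct and in essence coincides with what the paper does: the paper gives no proof of Lemma \ref{lem1}, stating it as the vector-valued analogue of Lemmas 3.1--3.2 of \cite{duruk1} (see also \cite{wang,constantin3,runst}), i.e.\ it defers entirely to the same scalar Moser-type composition estimates whose proof you sketch. The details you do supply (the mean-value bound using $h(0)=0$ for the $L^{\infty}$ and $L^{2}$ parts, the Gagliardo seminorm for $0<s<1$, Fa\`a di Bruno plus H\"older and Gagliardo--Nirenberg for integer $s$, and the Kato--Ponce/fractional chain rule bookkeeping for the noninteger remainder, where $h\in C^{[s]+1}$ is exactly what is needed) are sound, and outsourcing that last fractional step to \cite{duruk1,runst} is no weaker than the paper's own treatment.
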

\begin{lemma}\label{lem2}
    Let $s\geq 0$, $h\in C^{[s]+1}({\Bbb R}^{2})$. Then for any $M>0$ there is some constant $B(M)$ such that for
    all $~U,V\in (H^{s}\cap L^{\infty })^{2}$ with
    $~\left\Vert U\right\Vert _{\infty }\leq M$, $~\left\Vert V\right\Vert _{\infty }\leq M~$ and
    $~\left\Vert U\right\Vert _{s}\leq M$, $~\left\Vert V\right\Vert_{s}\leq M~$ we have
    \begin{equation*}
    {\left\Vert h(U)-h(V)\right\Vert }_{s}\leq B(M){\left\Vert U-V\right\Vert }_{s}~~\mbox{ and  }~~
           {\left\Vert h(U)-h(V)\right\Vert }_{\infty }\leq B(M){\left\Vert U-V\right\Vert }_{\infty }~.
    \end{equation*}
\end{lemma}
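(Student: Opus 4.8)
The plan is to reduce both estimates to a single integral representation of the difference and then separate the easy $L^{\infty}$ part from the more delicate $H^{s}$ part. Writing $W_{\theta}=V+\theta (U-V)$ for $\theta\in[0,1]$ and applying the fundamental theorem of calculus along the segment joining $V$ to $U$, I would start from
\begin{equation*}
    h(U)-h(V)=\int_{0}^{1}\frac{d}{d\theta}h(W_{\theta})\,\mbox{d}\theta
             =\sum_{i=1}^{2}(u_{i}-v_{i})\,K_{i},\qquad
    K_{i}=\int_{0}^{1}\frac{\partial h}{\partial u_{i}}(W_{\theta})\,\mbox{d}\theta .
\end{equation*}
Since $W_{\theta}$ is a convex combination, $\left\Vert W_{\theta}\right\Vert_{\infty}\leq M$ and $\left\Vert W_{\theta}\right\Vert_{s}\leq M$ for every $\theta$, and hence each $K_{i}$ satisfies $\left\Vert K_{i}\right\Vert_{\infty}\leq \sup_{|w|\leq M}|\partial h/\partial u_{i}(w)|=:C_{1}(M)$. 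The $L^{\infty}$ estimate then follows at once: $\left\Vert h(U)-h(V)\right\Vert_{\infty}\leq C_{1}(M)\sum_{i}\left\Vert u_{i}-v_{i}\right\Vert_{\infty}=C_{1}(M)\left\Vert U-V\right\Vert_{\infty}$. Note that $h$ need not vanish at the origin here; since only the difference $h(U)-h(V)$ enters, any additive constant in $h$ is irrelevant.

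For the $H^{s}$ estimate I would keep the same factorisation and control each product by a Moser-type (algebra) inequality together with Lemma \ref{lem1}. First split off the constant value of the integrand at the origin, $c_{i}=\partial h/\partial u_{i}(0)$, writing $K_{i}=c_{i}+(K_{i}-c_{i})$; the part $c_{i}(u_{i}-v_{i})$ contributes exactly $|c_{i}|\,\left\Vert u_{i}-v_{i}\right\Vert_{s}$. The remaining factor $K_{i}-c_{i}=\int_{0}^{1}[\,\partial h/\partial u_{i}(W_{\theta})-\partial h/\partial u_{i}(0)\,]\,\mbox{d}\theta$ is a composition of a function that vanishes at the origin with $W_{\theta}\in(H^{s}\cap L^{\infty})^{2}$, so a composition (Nemytskij) bound of the type in Lemma \ref{lem1}, combined with $\left\Vert W_{\theta}\right\Vert_{s}\leq M$ and Minkowski's integral inequality, gives $\left\Vert K_{i}-c_{i}\right\Vert_{s}\leq C_{2}(M)$. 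Applying the product estimate $\left\Vert fg\right\Vert_{s}\leq C(\left\Vert f\right\Vert_{s}\left\Vert g\right\Vert_{\infty}+\left\Vert f\right\Vert_{\infty}\left\Vert g\right\Vert_{s})$ to $f=u_{i}-v_{i}$ and $g=K_{i}-c_{i}$, summing over $i$, and collecting constants then produces the claimed bound.

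The main obstacle is this last step, namely ensuring that the right-hand side of the $H^{s}$ estimate is $\left\Vert U-V\right\Vert_{s}$ rather than the weaker $\left\Vert U-V\right\Vert_{\infty}$: the product inequality leaves a term $\left\Vert u_{i}-v_{i}\right\Vert_{\infty}\left\Vert K_{i}-c_{i}\right\Vert_{s}$. For $s>1/2$, the range actually used in this paper, this is harmless, since $H^{s}$ is a Banach algebra and one may instead bound $\left\Vert (u_{i}-v_{i})(K_{i}-c_{i})\right\Vert_{s}\leq C\left\Vert u_{i}-v_{i}\right\Vert_{s}\left\Vert K_{i}-c_{i}\right\Vert_{s}\leq C_{2}(M)\left\Vert u_{i}-v_{i}\right\Vert_{s}$ with no $L^{\infty}$ factor at all (equivalently, the Sobolev embedding $\left\Vert u_{i}-v_{i}\right\Vert_{\infty}\leq C\left\Vert u_{i}-v_{i}\right\Vert_{s}$ disposes of the stray term). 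For the full range $s\geq 0$ the borderline case $0\leq s\leq 1/2$ is genuinely more delicate, because the naive Moser splitting is lossy there; I would treat it exactly as the scalar statements (Lemma 3.1 and Lemma 3.2 of \cite{duruk1}) are treated, by invoking the sharp composition/Nemytskij-operator estimates in $H^{s}\cap L^{\infty}$ of Runst--Sickel \cite{runst}. A secondary point to watch, but not a serious one, is the bookkeeping of derivatives: $\partial h/\partial u_{i}$ has one fewer derivative than $h$, so that only $\partial h/\partial u_{i}\in C^{[s]}$ is available when estimating $K_{i}-c_{i}$; this is precisely why the hypothesis is calibrated at $h\in C^{[s]+1}$, and it is the same calibration that underlies the cited composition estimates.
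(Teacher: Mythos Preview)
The paper does not actually prove this lemma: it is simply stated as the vector-valued analogue of Lemma~3.2 in \cite{duruk1}, with the references \cite{wang,constantin3,runst} given for the underlying Nemytskij-operator estimates. Your sketch therefore already goes well beyond what the paper does, and the integral-representation-plus-product-inequality route you outline is exactly the standard mechanism behind those cited results.

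One point deserves a sharper comment than you give it. In your $H^{s}$ argument you want to bound $\left\Vert K_{i}-c_{i}\right\Vert_{s}$ by applying Lemma~\ref{lem1} to the function $\widetilde h_{i}=\partial h/\partial u_{i}-\partial h/\partial u_{i}(0)$. But Lemma~\ref{lem1} as stated requires $\widetilde h_{i}\in C^{[s]+1}$, whereas $h\in C^{[s]+1}$ only gives $\widetilde h_{i}\in C^{[s]}$. So the naive reduction to Lemma~\ref{lem1} costs one derivative and would force $h\in C^{[s]+2}$, not $C^{[s]+1}$. You flag this but call it ``not a serious'' issue; it is in fact the only real issue, and your sketch does not close it on its own. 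The resolution is precisely the one you fall back on at the end: the sharp Nemytskij estimates in $H^{s}\cap L^{\infty}$ from Runst--Sickel \cite{runst} prove the Lipschitz bound directly under the hypothesis $h\in C^{[s]+1}$, without passing through Lemma~\ref{lem1} on the derivative. Since that is exactly the reference the paper (and \cite{duruk1}) invokes, your proposal and the paper ultimately rest on the same source; you have simply unpacked the easy parts and correctly identified where the nontrivial input lies.
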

The Sobolev embedding theorem implies that $~H^{s}\subset L^{\infty }$ for $~s>\frac{1}{2}$.
 Then the bounds on
$L^{\infty }$ norms in Lemma \ref{lem2}  appear unnecessary  and we get:
\begin{corollary}\label{cor3}
    Let $~s>\frac{1}{2}$, $~h\in C^{[s]+1}({\Bbb R}^{2})$. Then for any $M>0$ there is some constant $B(M)$
    such that for all $~U,V\in (H^{s})^{2}~$ with
    $~\left\Vert U\right\Vert_{s}\leq M$, $~\left\Vert V\right\Vert_{s}\leq M$
    we have
    \begin{equation*}
        {\left\Vert h(U)-h(V)\right\Vert }_{s}\leq B(M){\left\Vert U-V\right\Vert }_{s}~.
    \end{equation*}
\end{corollary}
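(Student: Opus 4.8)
The plan is to deduce Corollary \ref{cor3} directly from Lemma \ref{lem2}, using the Sobolev embedding $H^{s}\hookrightarrow L^{\infty}$ valid for $s>1/2$. The point is simply that once $s>1/2$, an $H^{s}$ bound on $U$ and $V$ automatically yields an $L^{\infty}$ bound, so the hypotheses of Lemma \ref{lem2} involving $L^{\infty}$ norms are redundant and can be absorbed into the $H^{s}$ hypotheses.

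First I would record the embedding estimate: there is a constant $c>0$ (depending only on $s$) such that $\left\Vert u\right\Vert_{\infty}\leq c\left\Vert u\right\Vert_{s}$ for all $u\in H^{s}$, and hence, by the definition of the vector norms, $\left\Vert W\right\Vert_{\infty}\leq c\left\Vert W\right\Vert_{s}$ for all $W\in(H^{s})^{2}$. Given $M>0$ and $U,V\in(H^{s})^{2}$ with $\left\Vert U\right\Vert_{s}\leq M$ and $\left\Vert V\right\Vert_{s}\leq M$, this gives $\left\Vert U\right\Vert_{\infty}\leq cM$ and $\left\Vert V\right\Vert_{\infty}\leq cM$. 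Next I would set $M^{\ast}=\max\{M,cM\}=cM$ if $c\geq 1$ (one can always take $c\geq 1$), so that $U,V$ satisfy all four hypotheses of Lemma \ref{lem2} with $M$ replaced by $M^{\ast}$. Applying the first inequality of Lemma \ref{lem2} with this $M^{\ast}$ yields
\begin{equation*}
    \left\Vert h(U)-h(V)\right\Vert_{s}\leq B(M^{\ast})\left\Vert U-V\right\Vert_{s},
\end{equation*}
and defining the new constant in the statement of the corollary to be $B(M^{\ast})$ (renamed $B(M)$, as it still depends only on $M$ and $s$) completes the argument.

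There is no real obstacle here; the only point requiring a word of care is that Lemma \ref{lem2} is stated for $s\geq 0$ while the corollary restricts to $s>1/2$, and that the constant $B$ in the corollary is not literally the same function as in the lemma but rather $M\mapsto B(cM)$, which is harmless. One might also note in passing that the $L^{\infty}$ Lipschitz bound of Lemma \ref{lem2} still holds in this regime but is no longer needed for the local well-posedness argument, which is why it is dropped from the statement of the corollary.
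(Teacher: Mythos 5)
Your argument is correct and is exactly the reasoning the paper intends: Sobolev embedding makes the $L^{\infty}$ hypotheses of Lemma \ref{lem2} automatic from the $H^{s}$ bounds, and one applies that lemma with the adjusted constant $M^{\ast}=cM$. The paper gives no separate proof beyond this remark, so your write-up simply makes the same one-line deduction explicit.
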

Throughout this paper we assume  that $~f_{1},f_{2}\in C^{\infty}({\Bbb R}^{2})~$ with $~f_{1}(0)=f_{2}(0)=0~$.
In the case of  $~f_{1},f_{2}\in C^{k+1}(\mathbb{R}^{2})$, Lemma \ref{lem1} and Lemma \ref{lem2} will hold only for $s\leq k$. Thus
all the results below will hold for $s\leq k$. Note that the functions $g_{1}$ and $g_{2}$ appearing in (\ref{cp1}) and
(\ref{cp2}) will also satisfy the same assumptions as $f_{1}$ and $f_{2}$.
\begin{theorem}\label{theo4}
    Let $~s>1/2~$  and $~r_{1}, r_{2}\geq 2$. Then there is some $~T>0~$ such that the Cauchy problem (\ref{cp1})-(\ref{iv2})
    is well posed with solution $u_{1}$ and $u_{2}$ in $~C^{2}( \left[ 0,T\right],H^{s})~$ for initial data
    $~\varphi_{i}, \psi_{i} \in H^{s}$ $~~(i=1,2)$.
\end{theorem}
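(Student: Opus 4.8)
The plan is to recast the Cauchy problem (\ref{cp1})--(\ref{iv2}) as an integral equation in the Banach space $X=C\left([0,T],H^{s}\right)\times C\left([0,T],H^{s}\right)$ and then apply the contraction mapping principle. The key point is that, because $\widehat{\beta_{i}}(\xi)\leq C_{i}(1+\xi^{2})^{-r_{i}/2}$ with $r_{i}\geq 2$, the operator $v\mapsto \left(\beta_{i}\ast v\right)_{xx}$ is bounded from $H^{s}$ to $H^{s}$: in Fourier variables its symbol is $-\xi^{2}\widehat{\beta_{i}}(\xi)$, and $\xi^{2}\widehat{\beta_{i}}(\xi)\leq C_{i}\xi^{2}(1+\xi^{2})^{-r_{i}/2}\leq C_{i}$ since $r_{i}\geq 2$. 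Hence, writing $B_{i}v=\left(\beta_{i}\ast v\right)_{xx}$, we have $\left\Vert B_{i}v\right\Vert_{s}\leq C_{i}\left\Vert v\right\Vert_{s}$ for all $v\in H^{s}$. This is precisely where the hypothesis $r_{i}\geq 2$ is used, and it is what allows us to avoid any loss of derivatives.

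Next I would integrate the equations twice in $t$. Since $u_{itt}=B_{i}\!\left(u_{i}+g_{i}(u_{1},u_{2})\right)=B_{i}u_{i}+B_{i}g_{i}(U)$, with $U=(u_{1},u_{2})$, the problem is equivalent to the fixed-point system
\begin{equation*}
u_{i}(t)=\varphi_{i}+t\psi_{i}+\int_{0}^{t}(t-\tau)\,B_{i}u_{i}(\tau)\,\mathrm{d}\tau
+\int_{0}^{t}(t-\tau)\,B_{i}g_{i}\bigl(U(\tau)\bigr)\,\mathrm{d}\tau,\qquad i=1,2.
\end{equation*}
Define the map $\Phi=(\Phi_{1},\Phi_{2})$ on $X$ by letting $\Phi_{i}(U)$ be the right-hand side above. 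For a closed ball $\{U:\sup_{[0,T]}\left\Vert U(t)\right\Vert_{s}\leq R\}$ with $R$ chosen larger than $2\left(\left\Vert\varphi_{1}\right\Vert_{s}+\left\Vert\varphi_{2}\right\Vert_{s}+\left\Vert\psi_{1}\right\Vert_{s}+\left\Vert\psi_{2}\right\Vert_{s}\right)$, one estimates, using the boundedness of $B_{i}$ and Lemma \ref{lem1} (with $h=g_{i}$, noting $g_{i}(0)=0$ and $g_{i}\in C^{\infty}$), that
\begin{equation*}
\left\Vert\Phi_{i}(U)(t)\right\Vert_{s}\leq \left\Vert\varphi_{i}\right\Vert_{s}+T\left\Vert\psi_{i}\right\Vert_{s}
+\tfrac{T^{2}}{2}\,C_{i}\bigl(R+A(R)\,R\bigr),
\end{equation*}
so that for $T$ small enough $\Phi$ maps the ball into itself. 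For the contraction estimate I would use the boundedness of $B_{i}$ together with Corollary \ref{cor3} (valid since $s>1/2$) to get
\begin{equation*}
\left\Vert\Phi_{i}(U)(t)-\Phi_{i}(V)(t)\right\Vert_{s}
\leq \tfrac{T^{2}}{2}\,C_{i}\bigl(1+B(R)\bigr)\sup_{[0,T]}\left\Vert U(\tau)-V(\tau)\right\Vert_{s},
\end{equation*}
which for $T$ further restricted makes $\Phi$ a strict contraction. The Banach fixed-point theorem then yields a unique solution $U\in X$; standard arguments show $u_{i}\in C^{1}\left([0,T],H^{s}\right)$ from the first $t$-integration, and then $u_{itt}=B_{i}u_{i}+B_{i}g_{i}(U)\in C\left([0,T],H^{s}\right)$ gives $u_{i}\in C^{2}\left([0,T],H^{s}\right)$. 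Continuous dependence on the data follows from the same Lipschitz estimates applied to two solutions with different initial data, combined with a Gronwall argument.

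The main obstacle — really the only nontrivial point — is controlling the nonlinear term $g_{i}(u_{1},u_{2})$ in $H^{s}$ without losing regularity; this is exactly what Lemma \ref{lem1}, Lemma \ref{lem2}, and Corollary \ref{cor3} are designed to handle, so the proof reduces to assembling these estimates inside the fixed-point scheme. The coupling between the two equations is harmless, since each $g_{i}$ depends on the full vector $U$ and the vector-valued lemmas already account for this; one simply works in the product space with the product norm $\left\Vert U\right\Vert_{s}=\left\Vert u_{1}\right\Vert_{s}+\left\Vert u_{2}\right\Vert_{s}$. Everything else is routine, so I would present the double-integration reformulation, the two estimates above, and then invoke the contraction principle.
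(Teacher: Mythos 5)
Your proposal is correct, and its two key ingredients are exactly the paper's: the symbol estimate $\xi^{2}\widehat{\beta_{i}}(\xi)\leq C_{i}$ (i.e. inequality (\ref{aa})), which makes $v\mapsto(\beta_{i}\ast v)_{xx}$ bounded on $H^{s}$, and Corollary \ref{cor3} to handle $g_{i}$ for $s>1/2$. The difference is in how the fixed-point machinery is packaged: the paper converts (\ref{cp1})--(\ref{iv2}) into a first-order $H^{s}$-valued ODE system in $(u_{1},u_{2},v_{1},v_{2})$, checks that the right-hand side is locally Lipschitz, and then simply cites the standard existence--uniqueness--continuous-dependence theorem for ODEs in Banach spaces, whereas you integrate twice in $t$ and run the contraction mapping argument explicitly on the resulting integral equation in $C([0,T],H^{s})^{2}$, recovering $C^{2}$ regularity by bootstrapping. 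These are equivalent in substance (the abstract ODE theorem is itself proved by Picard iteration), so what the paper's route buys is brevity, while yours is self-contained and produces as a by-product the integral representations that the paper only derives later (formulas (\ref{uint})--(\ref{utint}) used in Lemma \ref{lem9} and Lemma \ref{ek}). One small point of hygiene: in your self-mapping estimate the constant from Lemma \ref{lem1} should be evaluated at a bound for $\left\Vert U\right\Vert_{\infty}$, i.e. $A(C_{\mathrm{emb}}R)$ with $C_{\mathrm{emb}}$ the Sobolev embedding constant for $s>1/2$, rather than $A(R)$; this does not affect the argument.
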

\begin{proof}
    We convert the problem into an $H^{s}$ valued system of ordinary differential equations
    \begin{equation*}
        \begin{array}{ll}
            u_{1t}=v_{1}, & ~~u_{1}( 0) =\varphi_{1}, \\
            u_{2t}=v_{2}, & ~~u_{2}( 0) =\varphi_{2}, \\
            v_{1t}=\beta_{1}\ast (f_{1}( u_{1},u_{2}))_{xx}, ~~&~~ v_{1}( 0) =\psi_{1}, \\
            v_{2t}=\beta_{2}\ast (f_{2}( u_{1},u_{2}))_{xx}, ~~&~~ v_{2}( 0) =\psi_{2}.
        \end{array}
    \end{equation*}%
    In order to use the standard well-posedness result \cite{ladas} for ordinary differential equations, it suffices
    to show that the right hand side is Lipschitz on     $H^{s}$.  Since $r_{i}\geq 2$ for  $i=1,2$, we have
    \begin{equation*}
        \left\vert -\xi ^{2}\widehat{\beta_{i}}(\xi)\right\vert \leq C_{i}\xi ^{2}(1+\xi ^{2})^{-r_{i}/2}\leq C_{i}.
    \end{equation*}
    Then we get
    \begin{eqnarray}
        \left\Vert \beta_{i}\ast w_{xx}\right\Vert _{s} & = &
            \left\Vert (1+\xi ^{2})^{s/2}\xi ^{2}\widehat{\beta}_{i}(\xi )\widehat{w}(\xi )\right\Vert  \nonumber \\
                                            &\leq &
            C_{i}\left\Vert (1+\xi ^{2})^{s/2}\widehat{w}(\xi )\right\Vert=C_{i}\left\Vert w\right\Vert _{s}~. \label{aa}
    \end{eqnarray}
    This implies that $~\beta_{i}\ast ( . )_{xx}~$ is a bounded linear map on $~H^{s}$. Then it follows  from
    Corollary \ref{cor3} that  $~\beta_{i}\ast (f_{i}(u_{1},u_{2}))_{xx}~$ is locally Lipschitz on $~H^{s}$
    for $~s>\frac{1}{2}$.
\end{proof}
\begin{remark}\label{extension}
    In  Theorem \ref{theo4} we have not used neither the assumption
    $~\widehat{\beta} ( \xi ) \geq 0$ nor the exactness condition (\ref{exact}); so in fact the local existence result
    holds for more general forms of kernel functions and nonlinear terms. Moreover, as in \cite{duruk1},
    for certain classes of kernel functions Theorem \ref{theo4} can be extended to the case of $H^{s}\cap L^{\infty}$ for
    $~0\leq s \leq 1/2$.
\end{remark}

The solution in Theorem \ref{theo4} can be extended to a maximal time interval of existence
$\left[ 0,T_{\max }\right) $ where  finite $ T_{\max }$ is characterized by the blow up condition
\begin{equation*}
    \limsup_{t\rightarrow T_{\max }^{-}}( \left\Vert U( t)
        \right\Vert _{s}+\left\Vert U_{t}( t) \right\Vert_{s})=\infty,
\end{equation*}
where $U_{t}=(u_{1t},u_{2t})$. Then the solution is global, i.e. $T_{\max }=\infty$ iff
\begin{equation}
    \mbox{for any} ~T<\infty, \mbox{ we have}~\limsup_{t\rightarrow T^{-}}
        ( \left\Vert U(t) \right\Vert_{s}
        +\left\Vert U_{t}(t) \right\Vert_{s})<\infty ~. \label{globala}
\end{equation}
\begin{lemma}\label{lem9}
    Let $~s>1/2~$, $~r_{1}, r_{2}\geq 2$ and let $U$ be the solution of the Cauchy problem
    (\ref{cp1})-(\ref{iv2}). Then there is a global solution if and only if
    \begin{equation}
        \mbox{for any }~T<\infty, \mbox{ we have}~\limsup_{t\rightarrow T^{-}}
            \left\Vert U( t) \right\Vert_{\infty }<\infty ~. \label{globalb}
    \end{equation}
\end{lemma}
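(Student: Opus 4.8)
The plan is to establish the two implications separately. The forward direction (global $\Rightarrow$ (\ref{globalb})) is immediate: if $T_{\max}=\infty$, then for any fixed $T<\infty$ we have $u_{1},u_{2}\in C^{2}([0,T],H^{s})$, and since $s>1/2$ the Sobolev embedding $H^{s}\subset L^{\infty}$ makes $t\mapsto \left\Vert U(t)\right\Vert_{\infty}$ a continuous, hence bounded, function on the compact interval $[0,T]$; in particular $\limsup_{t\to T^{-}}\left\Vert U(t)\right\Vert_{\infty}<\infty$, which is (\ref{globalb}).

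For the reverse (and essential) direction I would argue by contradiction: assume (\ref{globalb}) holds but $T_{\max}<\infty$. Applying (\ref{globalb}) with $T=T_{\max}$ yields a constant $M>0$ with $\left\Vert U(t)\right\Vert_{\infty}\leq M$ for all $t\in[0,T_{\max})$. Next I would use the integral form of the first-order system introduced in the proof of Theorem \ref{theo4}, namely for $i=1,2$
\[
    u_{i}(t)=\varphi_{i}+\int_{0}^{t}v_{i}(\tau)\,\mathrm{d}\tau,
    \qquad
    v_{i}(t)=\psi_{i}+\int_{0}^{t}\beta_{i}\ast\bigl(f_{i}(u_{1}(\tau),u_{2}(\tau))\bigr)_{xx}\,\mathrm{d}\tau .
\]
By the operator bound (\ref{aa}) and Lemma \ref{lem1} applied to $f_{i}$ (which satisfies $f_{i}(0)=0$), we have $\left\Vert \beta_{i}\ast(f_{i}(U(t)))_{xx}\right\Vert_{s}\leq C_{i}\left\Vert f_{i}(U(t))\right\Vert_{s}\leq C_{i}A(M)\left\Vert U(t)\right\Vert_{s}$ on $[0,T_{\max})$. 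Writing $\Phi(t)=\left\Vert U(t)\right\Vert_{s}+\left\Vert U_{t}(t)\right\Vert_{s}$ (note $\left\Vert U_{t}\right\Vert_{s}=\left\Vert v_{1}\right\Vert_{s}+\left\Vert v_{2}\right\Vert_{s}$), the two integral equations then give an inequality of the form $\Phi(t)\leq C_{0}+K\int_{0}^{t}\Phi(\tau)\,\mathrm{d}\tau$ on $[0,T_{\max})$, where $C_{0}$ depends only on the $H^{s}$ norms of the initial data and $K=1+(C_{1}+C_{2})A(M)$. Gronwall's inequality yields $\Phi(t)\leq C_{0}e^{KT_{\max}}$ on $[0,T_{\max})$, so $\limsup_{t\to T_{\max}^{-}}(\left\Vert U(t)\right\Vert_{s}+\left\Vert U_{t}(t)\right\Vert_{s})<\infty$. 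This contradicts the characterization of finite $T_{\max}$ recorded before the lemma; hence $T_{\max}=\infty$.

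I do not expect a serious obstacle here; the proof is a standard continuation argument. The one point requiring attention is that the nonlinear estimate feeding the Gronwall step must be \emph{linear} in $\left\Vert U\right\Vert_{s}$ with a constant controlled solely by $\left\Vert U\right\Vert_{\infty}$ — this is exactly the statement of Lemma \ref{lem1}, and it is what allows the argument to close from the mere boundedness of $\left\Vert U(t)\right\Vert_{\infty}$ rather than of the full $H^{s}\times H^{s}$ norm. If instead one only had the Lipschitz-type bounds of Lemma \ref{lem2}/Corollary \ref{cor3}, one would be forced to assume an a priori $H^{s}$ bound, and the improvement claimed in the lemma would be lost.
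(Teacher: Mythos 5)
Your proof is correct and follows essentially the same route as the paper: boundedness of $\left\Vert U(t)\right\Vert_{\infty}$ feeds the estimate $\left\Vert \beta_{i}\ast(f_{i}(U))_{xx}\right\Vert_{s}\leq C_{i}A(M)\left\Vert U\right\Vert_{s}$ (via (\ref{aa}) and Lemma \ref{lem1} with $f_{i}(0)=0$), and Gronwall's inequality then bounds $\left\Vert U(t)\right\Vert_{s}+\left\Vert U_{t}(t)\right\Vert_{s}$ up to the final time, contradicting the blow-up alternative. The only cosmetic difference is that you work with the once-integrated first-order system $(u_{i},v_{i})$ and a contradiction framing, whereas the paper integrates the equations twice to obtain (\ref{uint})--(\ref{utint}) and verifies the equivalence of (\ref{globala}) and (\ref{globalb}) directly.
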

\begin{proof}
    We will show that the two conditions (\ref{globala}) and (\ref{globalb}) are equivalent. First assume that
    (\ref{globala}) holds. By the Sobolev imbedding theorem,
    $\left\Vert U( t) \right\Vert _{\infty }\leq C \left\Vert U( t)\right\Vert_{s} $ for $~s>1/2~$ so (\ref{globalb}) holds.
    Conversely, assume that the solution exists for $t\in \left[ 0,T\right)$. Then
    $~M=\limsup_{t\rightarrow T^{-}}\left\Vert U( t) \right\Vert_{\infty } ~$ is finite and
    $\left\Vert U( t) \right\Vert _{\infty }\leq M$ for all $0\leq t\leq T$.
    If we integrate (\ref{cp1})-(\ref{cp2}) twice and compute the resulting double integral as an iterated integral,
    we get, for   $i=1,2$,
    \begin{eqnarray}
        u_{i}( x,t) &=&
            \varphi_{i} ( x) +t\psi_{i} ( x)
            +\int_{0}^{t}( t-\tau ) (\beta_{i}\ast f_{i} (u_{1},u_{2}))_{xx}
             (x,\tau) d\tau \label{uint} \\
        u_{it}( x,t) &=&
            \psi_{i} ( x) +\int_{0}^{t}(\beta_{i}\ast f_{i} (u_{1},u_{2}))_{xx}
             (x,\tau) d\tau  ~. \label{utint}
    \end{eqnarray}
    So, \ for all $t\in \left[ 0,T\right) $ and $i=1,2$
    \begin{eqnarray*}
        \left\Vert u_{i}( t) \right\Vert _{s} &\leq &
            \left\Vert \varphi_{i} \right\Vert _{s}+T\left\Vert \psi_{i} \right\Vert_{s}
            +T\int_{0}^{t}\left\Vert (\beta_{i}\ast f_{i} (u_{1},u_{2}))_{xx}(\tau)
             \right\Vert _{s}d\tau , \\
        \left\Vert u_{it}( t) \right\Vert _{s} &\leq &
            \left\Vert \psi_{i}\right\Vert _{s}+\int_{0}^{t}
            \left\Vert (\beta_{i}\ast f_{i} (u_{1},u_{2}))_{xx}(\tau) \right\Vert_{s}d\tau ~.
    \end{eqnarray*}
    Note that
    $\left\Vert (\beta_{i}\ast f_{i} (u_{1},u_{2}))_{xx} (\tau)\right\Vert_{s}
        \leq C_{i} \Vert  f_{i}( u_{1},u_{2}) (\tau) \Vert_{s}
        \leq C_{i} A_{i}( M) \Vert u_{i}(\tau) \Vert_{s}$
    where the first inequality follows from (\ref{aa}) and the second  from Lemma \ref{lem1}.
    Adding the four inequalities     we get
    \begin{eqnarray*}
      \left\Vert U( t) \right\Vert_{s}
            +\left\Vert U_{t}( t)\right\Vert_{s} &\leq &
            \left\Vert \varphi_{1} \right\Vert _{s}
            +\left\Vert \varphi_{2} \right\Vert _{s}
            +( T+1)(\left\Vert \psi_{1}\right\Vert_{s}+\left\Vert \psi_{2} \right\Vert_{s}) \\
            &+& ( T+1) C A( M)\int_{0}^{t}\left\Vert U( \tau ) \right\Vert_{s}d\tau,
    \end{eqnarray*}
    where $C=\mbox{max}(C_{1},C_{2})$ and $A(M)=\mbox{max}(A_{1}(M),A_{2}(M))$.  Gronwall's Lemma implies
    \begin{equation*}
            \left\Vert U( t) \right\Vert_{s}
            +\left\Vert U_{t}( t)\right\Vert_{s}\leq
            [ \left\Vert \varphi_{1} \right\Vert _{s}
            +\left\Vert \varphi_{2} \right\Vert _{s}
            +( T+1)(\left\Vert \psi_{1}\right\Vert_{s}+\left\Vert \psi_{2} \right\Vert_{s}) ]
            e^{( T+1) C A( M)T}
    \end{equation*}
    for all $t\in [0,T)$ and consequently
    \begin{equation*}
       \limsup_{t\rightarrow T^{-}}~(\left\Vert U(t) \right\Vert_{s}
            +\left\Vert U_{t}(t) \right\Vert_{s})<\infty ~.
    \end{equation*}
\end{proof}

\setcounter{equation}{0}
\section{Conservation of Energy}
\noindent
In the rest of the study we will assume that $\widehat{\beta}_{i}(\xi )$ has only isolated zeros for $i=1,2$.
Let $P_{i}$ be operator defined by
$P_{i}w=\mathcal{F}^{-1}(\left\vert \xi \right\vert^{-1}(\widehat{\beta}_{i}(\xi))^{-1/2}\widehat{w}( \xi ))$
with the inverse Fourier transform $\mathcal{F}^{-1}$. Note that although $P_{i}$ may fail to be a bounded operator, its
inverse
$P_{i}^{-1}: H^{s+\frac{r_{i}}{2}}\rightarrow H^{s}$ is bounded and one-to-one for $s\geq 0.$ Then $P_{i}$ is
well defined with $domain (P_{i})=range (P_{i}^{-1})$.
Clearly, $P_{i}^{-2}w=-(\beta _{i}\ast w)_{xx}=-\beta _{i}\ast w_{xx}$.
\begin{lemma}\label{ek}
    Let $~s>1/2~$  and $~r_{1}, r_{2}\geq 2$. Suppose the solution of the Cauchy problem
    (\ref{cp1})-(\ref{iv2}) exists with  $u_{1}$ and $u_{2}$
    in $C^{2}(\left[ 0,T\right), H^{s}\cap L^{\infty })$ $\ $\ for some $s>1/2$.
    If $P_{1}\psi_{1},P_{2}\psi_{2}\in L^{2}$, then $P_{1}u_{1t},P_{2}u_{2t}\in C^{1}(\left[ 0,T\right) ,L^{2})$.
    If moreover $P_{1}\varphi_{1},P_{2}\varphi_{2}\in L^{2}$, then
    $P_{1}u_{1},P_{2}u_{2}\in C^{2}(\left[ 0,T\right),L^{2})$.
\end{lemma}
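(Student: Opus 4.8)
The plan is to apply the operator $P_i$ to the integral representations (\ref{uint}) and (\ref{utint}) and read off the regularity term by term. Since $P_i^{-2}w=-(\beta_i\ast w)_{xx}$, identities (\ref{uint})--(\ref{utint}) say that $u_{it}(t)=\psi_i-\int_0^t P_i^{-2}f_i(U(\tau))\,d\tau$ and $u_i(t)=\varphi_i+t\psi_i-\int_0^t(t-\tau)P_i^{-2}f_i(U(\tau))\,d\tau$, with $f_i(U)=u_i+g_i(u_1,u_2)$. Applying $P_i$ and using $P_iP_i^{-2}=P_i^{-1}$, I would obtain
\[
P_iu_{it}(t)=P_i\psi_i-\int_0^t P_i^{-1}f_i(U(\tau))\,d\tau,\qquad
P_iu_i(t)=P_i\varphi_i+tP_i\psi_i-\int_0^t(t-\tau)P_i^{-1}f_i(U(\tau))\,d\tau .
\]
Granting these, the conclusions follow at once: $P_i\psi_i$ and $P_i\varphi_i$ lie in $L^2$ by hypothesis, while the integral terms are a $C^1$ and a $C^2$ function of $t$ into $L^2$ respectively — the second integral has $t$-derivative $\int_0^t P_i^{-1}f_i(U(\tau))\,d\tau$ and second $t$-derivative $P_i^{-1}f_i(U(t))$, and the first has $t$-derivative $-P_i^{-1}f_i(U(t))$.

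Two ingredients have to be in place before the identities above make sense. First I would record that $r_i\geq 2$ makes $P_i^{-1}$ a bounded operator: its symbol is $|\xi|(\widehat{\beta_i}(\xi))^{1/2}$, and $\xi^2\widehat{\beta_i}(\xi)\leq C_i\,\xi^2(1+\xi^2)^{-r_i/2}\leq C_i(1+\xi^2)^{1-r_i/2}\leq C_i$, so that $\|P_i^{-1}w\|\leq C_i^{1/2}\|w\|$ on $L^2$, hence also $\|P_i^{-1}w\|\leq C_i^{1/2}\|w\|_s$. Second, by Lemma \ref{lem1} (with $h=f_i$, noting $f_i(0)=0$) together with the continuity contained in Corollary \ref{cor3}, the map $\tau\mapsto f_i(U(\tau))$ is continuous from $[0,T)$ into $H^s$; composing with the bounded map $P_i^{-1}:H^s\to L^2$, the integrand $\tau\mapsto P_i^{-1}f_i(U(\tau))$ is continuous, hence locally bounded, from $[0,T)$ into $L^2$. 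Minkowski's integral inequality then shows that the integrals in the two displayed identities are well defined in $L^2$ and depend continuously — in fact smoothly, as noted above — on $t$.

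The one delicate step, and the point I expect to take real care with, is the passage of $P_i$ through the $\tau$-integral, since the symbol $|\xi|^{-1}(\widehat{\beta_i}(\xi))^{-1/2}$ of $P_i$ is singular (at $\xi=0$ and at the isolated zeros of $\widehat{\beta_i}$). I would carry this out on the Fourier transform side: taking the $x$-Fourier transform of (\ref{uint})--(\ref{utint}) and using $\widehat{(\beta_i\ast f_i(U))_{xx}}(\xi,\tau)=-\xi^2\widehat{\beta_i}(\xi)\widehat{f_i(U)}(\xi,\tau)$, I multiply through by the scalar $|\xi|^{-1}(\widehat{\beta_i}(\xi))^{-1/2}$ under the integral sign, which for each fixed $\xi$ is legitimate and turns the kernel into the bounded symbol $|\xi|(\widehat{\beta_i}(\xi))^{1/2}$ of $P_i^{-1}$; the boundary terms become $\widehat{P_i\psi_i}$, and in (\ref{uint}) also $\widehat{P_i\varphi_i}$, which are in $L^2$ precisely by the respective hypotheses, thereby absorbing the singularity of the symbol. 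The estimates of the previous paragraph then show that the left-hand sides are $L^2$ functions of $\xi$, i.e. $u_{it}(t)$ and $u_i(t)$ lie in the domain of $P_i$, and the claimed identities hold, which finishes the proof. (An alternative is to invoke that a Fourier multiplier operator is closed and to commute it with the Bochner integral, using that both $P_i^{-2}f_i(U(\tau))$ and $P_i^{-1}f_i(U(\tau))$ are continuous $L^2$-valued, hence Bochner integrable; but the Fourier-side computation keeps the role of the hypotheses on $P_i\psi_i$ and $P_i\varphi_i$ most transparent.)
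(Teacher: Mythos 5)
Your proposal is correct and follows essentially the same route as the paper: apply $P_i$ to the integral representations (\ref{uint})--(\ref{utint}), observe that $P_iP_i^{-2}=P_i^{-1}$ has a bounded symbol $|\xi|(\widehat{\beta_i}(\xi))^{1/2}$ for $r_i\geq 2$ so that $P_i^{-1}f_i(u_1,u_2)\in L^2$ by Lemma \ref{lem1}, and read off the $C^1$ and $C^2$ regularity from the resulting formulas. Your extra care in justifying the interchange of $P_i$ with the time integral on the Fourier side is a detail the paper leaves implicit, but it does not change the argument.
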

\begin{proof}
    It follows from  (\ref{utint}) that for $i=1,2$
    \begin{eqnarray*}
        P_{i}u_{it}(x, t)= P_{i}\psi_{i} (x)
            -\int_{0}^{t} ( P_{i}^{-1} f_{i}(u_{1},u_{2}))(x,\tau) d\tau ~.
    \end{eqnarray*}
    It is clear from Lemma \ref{lem1} that   $f_{i}(u_{1},u_{2}) \in H^{s}$. Also,
    $P_{i}^{-1}w=\mathcal{F}^{-1}(\left\vert \xi \right\vert (\widehat{\beta}_{i}(\xi))^{1/2}\widehat{w}(\xi))$
    thus $P_{i}^{-1}( f_{i}(u_{1},u_{2}) ) \in H^{s+\frac{r_{i}}{2}-1}\subset L^{2}$ and hence
    $P_{i}u_{it} \in L^{2}$. The continuity and differentiability of $P_{i}u_{i}$ in $t$ follows from the integral
    representation above.  With a similar approach (\ref{uint}) gives the second statement.
\end{proof}
\begin{lemma}\label{energy}
    Let $~s>1/2~$  and $~r_{1}, r_{2}\geq 2$. Suppose that $(u_{1},u_{2})$ satisfies (\ref{cp1})-(\ref{iv2})
    on some interval $\left[ 0,T\right)$. If $P_{1}\psi_{1}, P_{2}\psi_{2}\in L^{2}$  and the function $G(\varphi_{1}, \varphi_{2})$
    defined by (\ref{gg}) belongs to $L^{1}$, then for any $t\in \left[ 0,T\right)$ the energy
    \begin{eqnarray*}
        E(t) &=&
             \left\Vert P_{1}u_{1t}(t) \right\Vert^{2}
            +\left\Vert P_{2}u_{2t}(t) \right\Vert^{2}
            +\left\Vert u_{1}(t) \right\Vert^{2}
            +\left\Vert u_{2}(t) \right\Vert^{2}
            +2\int_{{\Bbb R}}G (u_{1},u_{2}) dx \\
            &=&
             \left\Vert P_{1}u_{1t}(t) \right\Vert^{2}
            +\left\Vert P_{2}u_{2t}(t) \right\Vert^{2}
            +2\int_{{\Bbb R}}F(u_{1},u_{2}) dx
    \end{eqnarray*}
    is constant in $\left[ 0,T\right)$.
\end{lemma}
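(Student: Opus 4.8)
The plan is to show $E'(t)=0$ by differentiating the energy and using the equations. First I would work with the second (shorter) form of $E(t)$, namely
\[
E(t)=\left\Vert P_{1}u_{1t}(t)\right\Vert^{2}+\left\Vert P_{2}u_{2t}(t)\right\Vert^{2}+2\int_{\Bbb R}F(u_{1},u_{2})\,dx,
\]
which is legitimate since $F=\frac12(u_1^2+u_2^2)+G$ and the integrability hypotheses ($u_i\in L^2$ from $H^s$, $G(\varphi_1,\varphi_2)\in L^1$, hence $G(u_1,u_2)\in L^1$ for later $t$ by Lemma~\ref{lem1} applied to $G$) guarantee all terms are finite. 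By Lemma~\ref{ek}, $P_iu_{it}\in C^1([0,T),L^2)$, so the first two terms are differentiable in $t$ with $\frac{d}{dt}\left\Vert P_iu_{it}\right\Vert^2=2\langle P_iu_{it},P_iu_{itt}\rangle$; and since $u_i\in C^2([0,T),H^s)$ with $s>1/2$, one has $u_i,u_{it}\in L^\infty$, so the nonlinear term $\int_{\Bbb R}F(u_1,u_2)\,dx$ is differentiable with $\frac{d}{dt}\int F\,dx=\int (f_1u_{1t}+f_2u_{2t})\,dx=\langle f_1(u_1,u_2),u_{1t}\rangle+\langle f_2(u_1,u_2),u_{2t}\rangle$, using $f_i=\partial F/\partial u_i$ and the exactness condition (which is exactly what makes $F$ well defined).

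Next I would compute $\langle P_iu_{it},P_iu_{itt}\rangle$. From the equations, $u_{itt}=(\beta_i\ast f_i(u_1,u_2))_{xx}=-P_i^{-2}f_i(u_1,u_2)$. Formally, $\langle P_iu_{it},P_iu_{itt}\rangle=\langle P_iu_{it},-P_iP_i^{-2}f_i\rangle=\langle P_iu_{it},-P_i^{-1}f_i\rangle=-\langle u_{it},f_i(u_1,u_2)\rangle$, where the last step moves $P_i$ across the inner product (it is a Fourier multiplier by the real, even symbol $|\xi|^{-1}\widehat\beta_i^{-1/2}$, hence formally self-adjoint). Adding over $i=1,2$ and combining with the nonlinear contribution above, $E'(t)=2\big(-\langle u_{1t},f_1\rangle-\langle u_{2t},f_2\rangle\big)+2\big(\langle f_1,u_{1t}\rangle+\langle f_2,u_{2t}\rangle\big)=0$, so $E$ is constant.

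The main obstacle is justifying the ``$P_i$ moves across the inner product'' step rigorously, since $P_i$ is an \emph{unbounded} operator and $P_iu_{itt}$ need not lie in $L^2$. The clean way is to argue at the level of the integrated identities rather than differentiate naively: from the representation in the proof of Lemma~\ref{ek},
\[
P_iu_{it}(t)=P_i\psi_i-\int_0^t\big(P_i^{-1}f_i(u_1,u_2)\big)(\tau)\,d\tau,
\]
where $P_i^{-1}f_i\in H^{s+r_i/2-1}\subset L^2$ is continuous in $t$; hence $P_iu_{it}$ is genuinely $C^1$ into $L^2$ with $\frac{d}{dt}(P_iu_{it})=-P_i^{-1}f_i(u_1,u_2)$. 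Then $\frac{d}{dt}\left\Vert P_iu_{it}\right\Vert^2=2\langle P_iu_{it},-P_i^{-1}f_i(u_1,u_2)\rangle$, and now $\langle P_iu_{it},P_i^{-1}f_i\rangle=\langle u_{it},f_i\rangle$ is a pairing between $H^{r_i/2}$-type and $H^s$ functions through the multiplier identity $|\xi|^{-1}\widehat\beta_i^{-1/2}\cdot|\xi|\widehat\beta_i^{1/2}=1$ (valid a.e.\ since $\widehat\beta_i$ has only isolated zeros), so no unbounded operator is ever applied to something outside its domain. I would also note that $P_i^{-1}$ being bounded and one-to-one, together with $\widehat\beta_i\geq0$ having isolated zeros, is precisely what licenses these Parseval manipulations; the exactness condition~(\ref{exact}) enters only in guaranteeing $F$ (equivalently $G$) exists so that the $dx$-integral term is a genuine potential whose $t$-derivative produces $\langle f_i,u_{it}\rangle$.
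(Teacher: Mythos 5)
Your proposal is correct and follows essentially the same route as the paper: the paper rewrites the equations as $P_{i}^{2}u_{itt}+u_{i}+g_{i}(u_{1},u_{2})=0$, multiplies by $2u_{it}$, integrates, adds, and uses Parseval to get $\frac{dE}{dt}=0$, which is exactly your computation, with your use of the integral representation from Lemma \ref{ek} serving as a (welcome) more careful justification of the Parseval/multiplier step. The only quibble is your parenthetical claim that Lemma \ref{lem1} gives $G(u_{1},u_{2})\in L^{1}$ at later times (it only gives $H^{s}\cap L^{\infty}$); this is inessential, since the $L^{1}$ property propagates from $t=0$ via $G(u_{1},u_{2})(t)=G(\varphi_{1},\varphi_{2})+\int_{0}^{t}(g_{1}u_{1t}+g_{2}u_{2t})\,d\tau$ with the integrand a product of $L^{2}$ functions.
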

\begin{proof}
     Lemma \ref{ek} says that $P_{i}u_{it}( t) \in L^{2}$ for $~i=1,2$. Equations  (\ref{cp1})-(\ref{cp2}) become
     $P_{i}^{2}u_{itt}+u_{i}+g_{i}( u_{1},u_{2}) =0$ $~(i=1,2)$. Multiplying by
     $2u_{it}$, integrating in $x$, adding the two equalities  and using Parseval's identity we obtain
     $\frac{dE}{dt}=0$.
\end{proof}

\setcounter{equation}{0}
\section{Blow Up in Finite Time}
\noindent
The following lemma will be used in the sequel to prove  blow up in finite time.
\begin{lemma}\label{blow-up} \cite{varga,levine}
    Suppose $\Phi( t)$, $t\geq 0$ is a positive, twice differentiable function satisfying
    $\Phi^{\prime \prime }\Phi-( 1+\nu ) (\Phi^{\prime })^{2}\geq 0$ where $\nu >0$.
    If $\Phi( 0) >0$ and $\Phi^{\prime }( 0) >0$, then
    $\Phi( t) \rightarrow \infty $ as $t\rightarrow t_{1}$ for some
    $t_{1}\leq \Phi( 0) /\nu \Phi^{\prime }( 0) $.
\end{lemma}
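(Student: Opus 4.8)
The plan is to convert the nonlinear differential inequality into a concavity statement for a suitable power of $\Phi$, and then exploit the elementary fact that a concave function lies below each of its tangent lines. So first I would introduce the auxiliary function $\Psi(t)=\Phi(t)^{-\nu}$, which is positive and twice differentiable wherever $\Phi$ is finite, positive and $C^2$. A direct computation gives
\begin{equation*}
    \Psi' = -\nu\,\Phi^{-\nu-1}\Phi', \qquad
    \Psi'' = -\nu\,\Phi^{-\nu-2}\bigl(\Phi\Phi'' - (1+\nu)(\Phi')^2\bigr),
\end{equation*}
so the hypothesis $\Phi''\Phi-(1+\nu)(\Phi')^2\ge 0$ forces $\Psi''\le 0$; that is, $\Psi$ is concave on its interval of definition.

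Next I would read off the initial data for $\Psi$: since $\Phi(0)>0$ and $\Phi'(0)>0$, we have $\Psi(0)=\Phi(0)^{-\nu}>0$ and $\Psi'(0)=-\nu\,\Phi(0)^{-\nu-1}\Phi'(0)<0$. Concavity makes $\Psi'$ non-increasing, hence $\Psi'(t)\le\Psi'(0)<0$, so $\Psi$ is strictly decreasing (equivalently $\Phi$ is increasing, hence stays $\ge\Phi(0)>0$). Concavity also yields the tangent-line bound
\begin{equation*}
    0<\Psi(t)\le \Psi(0)+\Psi'(0)\,t
\end{equation*}
as long as $\Phi$ remains finite. The right-hand side vanishes at $t_1=-\Psi(0)/\Psi'(0)=\Phi(0)/\bigl(\nu\Phi'(0)\bigr)$, so $\Psi$ cannot remain positive all the way up to $t_1$. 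Therefore there is a time $t^\ast\le t_1$ with $\Psi(t)\to 0^+$ as $t\to t^{\ast-}$, which is exactly $\Phi(t)\to\infty$ as $t\to t^{\ast-}$, giving the asserted bound on the blow-up time.

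There is no genuine analytic obstacle here; the only point requiring a little care is the bookkeeping of the maximal interval on which $\Phi$ stays positive and $C^2$, so that the identities for $\Psi',\Psi''$ and the tangent-line estimate are legitimately applied on all of $[0,t^\ast)$. This is handled by the monotonicity observation above: $\Phi'(0)>0$ together with $\Psi$ decreasing forces $\Phi$ to be increasing and bounded below by $\Phi(0)>0$, so the solution can only fail to continue by blowing up, and the estimate shows this must occur no later than $t_1$.
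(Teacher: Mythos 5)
Your proof is correct: the computation of $\Psi''$ for $\Psi=\Phi^{-\nu}$, the concavity/tangent-line bound, and the monotonicity remark ensuring that loss of existence can only occur through $\Phi\to\infty$ are all sound, and they yield exactly the stated bound $t_1\leq \Phi(0)/\nu\Phi'(0)$. The paper itself gives no proof of this lemma, citing Kalantarov--Ladyzhenskaya and Levine, and your argument is precisely the classical concavity proof found in those references, so there is nothing further to reconcile.
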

\begin{theorem}
    Let $~s>1/2~$  and $~r_{1}, r_{2}\geq 2$. Suppose that
    $P_{1}\varphi_{1}, P_{2}\varphi_{2}, P_{1}\psi_{1}, P_{2}\psi_{2} \in L^{2}$ and
    $~~G(\varphi_{1},\varphi_{2}) \in L^{1}$. If there is some
    $\nu >0$ such that
    \begin{equation*}
    u_{1} f_{1}(u_{1},u_{2})+u_{2} f_{2}(u_{1},u_{2})  \leq 2 ( 1+2\nu ) F(u_{1},u_{2}),
    \end{equation*}
    and
    \begin{equation*}
    E( 0) =\left\Vert P_{1}\psi_{1} \right\Vert^{2}+\left\Vert P_{2}\psi_{2} \right\Vert^{2}
    +2\int_{\Bbb R} F (\varphi_{1},\varphi_{2}) dx<0,
    \end{equation*}
    then the solution $(u_{1},u_{2})$ of the Cauchy problem (\ref{cp1})-(\ref{iv2}) blows up in finite time.
\end{theorem}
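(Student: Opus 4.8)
The plan is to establish blow-up by a concavity (Levine-type) argument applied to the functional
\[
\Phi(t)=\left\Vert P_1u_1(t)\right\Vert^{2}+\left\Vert P_2u_2(t)\right\Vert^{2}.
\]
By Lemma \ref{ek}, the hypotheses $P_i\varphi_i,P_i\psi_i\in L^{2}$ and $G(\varphi_1,\varphi_2)\in L^{1}$ guarantee that $P_1u_1,P_2u_2\in C^{2}([0,T_{\max}),L^{2})$ and $P_1u_{1t},P_2u_{2t}\in C^{1}([0,T_{\max}),L^{2})$, so $\Phi$ is a well-defined, nonnegative, twice continuously differentiable function on the maximal existence interval; since $r_i\ge2$ makes $|\xi|(\widehat{\beta}_i(\xi))^{1/2}$ a bounded multiplier, one also has $P_iu_{itt}=-P_i^{-1}f_i(u_1,u_2)\in L^{2}$. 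By Lemma \ref{energy}, $E(t)\equiv E(0)<0$ on $[0,T_{\max})$.

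First I would differentiate $\Phi$ twice, using the self-adjointness of $P_i$ and the identity $P_i^{2}u_{itt}=-f_i(u_1,u_2)$ established in the proof of Lemma \ref{energy}, to obtain
\[
\tfrac12\Phi'(t)=\langle P_1u_1,P_1u_{1t}\rangle+\langle P_2u_2,P_2u_{2t}\rangle,\qquad
\tfrac12\Phi''(t)=S(t)-\int_{{\Bbb R}}\bigl(u_1f_1+u_2f_2\bigr)\,dx,
\]
where $S(t)=\|P_1u_{1t}\|^{2}+\|P_2u_{2t}\|^{2}$ and $f_i=f_i(u_1,u_2)$. Integrating the structural hypothesis $u_1f_1+u_2f_2\le2(1+2\nu)F$ over ${\Bbb R}$ and inserting energy conservation in the form $2\int_{{\Bbb R}}F\,dx=E(0)-S(t)$ then gives
\[
\tfrac12\Phi''(t)\ \ge\ S(t)-2(1+2\nu)\!\int_{{\Bbb R}}F\,dx\ =\ (2+2\nu)\,S(t)-(1+2\nu)E(0).
\]
By the Cauchy--Schwarz inequality applied to the two-vectors $(\|P_1u_1\|,\|P_2u_2\|)$ and $(\|P_1u_{1t}\|,\|P_2u_{2t}\|)$ one has $\bigl(\tfrac12\Phi'\bigr)^{2}\le\Phi\,S$; multiplying the previous inequality by $\Phi\ge0$ and using $(1+\nu)(\Phi')^{2}=4(1+\nu)\bigl(\tfrac12\Phi'\bigr)^{2}$ yields
\[
\Phi''\Phi-(1+\nu)(\Phi')^{2}\ \ge\ \bigl[(4+4\nu)S-2(1+2\nu)E(0)\bigr]\Phi-4(1+\nu)S\,\Phi\ =\ -2(1+2\nu)E(0)\,\Phi\ \ge\ 0 .
\]

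Finally I would invoke Lemma \ref{blow-up}. If $T_{\max}<\infty$ the claim already holds, so suppose $T_{\max}=\infty$. From $\tfrac12\Phi''\ge-(1+2\nu)E(0)>0$, $\Phi'$ is strictly increasing and unbounded, hence $\Phi'(t_{*})>0$ for some $t_{*}\ge0$; moreover $\Phi(t_{*})>0$, since $\Phi(t_{*})=0$ would force $P_iu_i(t_{*})=0$ and therefore $\Phi'(t_{*})=0$. Applying Lemma \ref{blow-up} to $t\mapsto\Phi(t+t_{*})$ shows $\Phi(t)\to\infty$ as $t\to t_{1}$ for some finite $t_{1}\le t_{*}+\Phi(t_{*})/\bigl(\nu\Phi'(t_{*})\bigr)$, which contradicts the continuity of $\Phi$ on $[0,\infty)=[0,T_{\max})$. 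Hence $T_{\max}<\infty$, and by the blow-up characterization of $T_{\max}$ the solution $(u_1,u_2)$ blows up in finite time. I expect the main obstacle to lie not in the algebra above but in the functional-analytic bookkeeping: checking that $\Phi$ is genuinely $C^{2}$ and that the Parseval/self-adjointness manipulations are legitimate for the possibly unbounded operators $P_i$ — which is exactly what the assumptions $P_i\varphi_i,P_i\psi_i\in L^{2}$, $G(\varphi_1,\varphi_2)\in L^{1}$ and $r_i\ge2$ secure via Lemmas \ref{ek} and \ref{energy} — together with the concluding step that converts unboundedness of $\Phi$ into nonexistence of the $H^{s}$ solution.
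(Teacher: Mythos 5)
Your argument is correct, and it is the same Levine/Kalantarov--Ladyzhenskaya concavity method as the paper's, with one genuine difference in the key technical device. The paper works with the augmented functional $\Phi(t)=\Vert P_1u_1\Vert^2+\Vert P_2u_2\Vert^2+b(t+t_0)^2$, choosing $b\leq -E(0)$ so that the concavity inequality $\Phi\Phi''-(1+\nu)(\Phi')^2\geq -2(1+2\nu)(E(0)+b)\Phi\geq 0$ survives the extra $+2b$ and $+b(t+t_0)$ terms, and then taking $t_0$ large to force $\Phi'(0)>0$ (and $\Phi(0)>0$) at time zero. You instead keep the bare functional and exploit the strict negativity of $E(0)$: since $\Phi''\geq -2(1+2\nu)E(0)>0$, the derivative $\Phi'$ eventually becomes positive, and you apply Lemma \ref{blow-up} after a time shift to $t_*$, checking $\Phi(t_*)>0$ along the way; your differentiation of $\Phi$, the use of $P_i^2u_{itt}=-f_i$, energy conservation in the form $2\int F\,dx=E(0)-S(t)$, and the two-stage Cauchy--Schwarz bound $(\Phi')^2\leq 4\Phi S$ all match the paper's computation, and the regularity needed for these manipulations is indeed supplied by Lemmas \ref{ek} and \ref{energy} exactly as you say. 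What the paper's auxiliary term buys is flexibility beyond the theorem as stated: as its Remark shows, the same $b(t+t_0)^2$ device yields blow-up for certain data with $E(0)>0$ (under $A^2<E(0)B$), whereas your leaner argument hinges on $E(0)<0$ strictly and would not extend to that case; conversely, your version avoids tuning $b$ and $t_0$ and handles possibly vanishing initial data ($\Phi(0)=0$ or $\Phi'(0)\leq 0$) automatically via the shift. Both reach the same contradiction with the finiteness of $\Phi$ on $[0,\infty)$ under the assumption $T_{\max}=\infty$.
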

\begin{proof}
    Let
    \begin{equation*}
    \Phi(t)=\left\Vert P_{1}u_{1}(t)\right\Vert^{2}+\left\Vert P_{2}u_{2}(t)\right\Vert^{2}+b(t+t_{0})^2
    \end{equation*}
    for some positive $b$ and $t_{0}$ that will be specified later.
    Assume that the maximal time of existence of the solution of the Cauchy problem (\ref{cp1})-(\ref{iv2})
    is infinite. Then $P_{1}u_{1}(t),P_{1}u_{1t}(t),P_{2}u_{2}(t),P_{2}u_{2t}(t)\in L^{2}$ for
    all $t>0$; thus $\Phi(t)$ must be finite for all $t$. However, we will show below that $\Phi(t)$  blows up in
    finite time.

    We have
    \begin{eqnarray*}
        && \!\!\!\!\!\!\!\!\!\!\!\!\!\!\!\!\!\!
        \Phi ^{\prime}(t)=2\left\langle P_{1}u_{1},P_{1}u_{1t}\right\rangle
        +2\left\langle P_{2}u_{2},P_{2}u_{2t}\right\rangle+2b(t+t_{0}), \\
        && \!\!\!\!\!\!\!\!\!\!\!\!\!\!\!\!\!\!
        \Phi ^{\prime \prime }(t)=2\left\Vert P_{1}u_{1t}\right\Vert ^{2}+2\left\Vert P_{2}u_{2t}\right\Vert^{2}
            +2\left\langle P_{1}u_{1},P_{1}u_{1tt}\right\rangle+2\left\langle P_{2}u_{2},P_{2}u_{2tt}\right\rangle+2b~.
    \end{eqnarray*}
    Since
    \begin{equation*}
        \left\langle P_{i}u_{i},P_{i}u_{itt}\right\rangle = \left\langle u_{i},P_{i}^{2}u_{itt} \right\rangle
        =-\left\langle u_{i},f_{i}(u_{1},u_{2})\right\rangle, ~~~~~i=1,2,
    \end{equation*}
    and
    \begin{eqnarray*}
       -\int [u_{1} f_{1}(u_{1},u_{2})+u_{2} f_{2}(u_{1},u_{2})]dx & \geq & - 2 ( 1+2\nu ) \int F(u_{1},u_{2})dx \\
        & = &(1+2\nu)(\left\Vert P_{1}u_{1t}(t) \right\Vert^{2}
                +\left\Vert P_{2}u_{2t}(t) \right\Vert^{2}-E(0)),
    \end{eqnarray*}
    we get
    \begin{eqnarray*}
        \Phi^{\prime \prime }(t)
        & \geq &  2\left\Vert P_{1}u_{1t}\right\Vert ^{2}+2\left\Vert P_{2}u_{2t}\right\Vert^{2}+2b-2(1+2\nu )
                (E(0)-\left\Vert P_{1}u_{1t}\right\Vert ^{2}-\left\Vert P_{2}u_{2t}\right\Vert ^{2}) \\
        &= & -2(1+2\nu )E(0)+2b+4(1+\nu )(\left\Vert P_{1}u_{1t}\right\Vert^{2}+\left\Vert P_{2}u_{2t}\right\Vert ^{2}).
    \end{eqnarray*}
    By the Cauchy-Schwarz inequality we have
    \begin{eqnarray*}
        (\Phi^{\prime}(t))^{2}
        &=& 4[\left\langle P_{1}u_{1},P_{1}u_{1t}\right\rangle+\left\langle P_{2}u_{2},P_{2}u_{2t}\right\rangle+b(t+t_{0})]^{2} \\
        &\leq & 4[\left\Vert P_{1}u_{1}\right\Vert \left\Vert P_{1}u_{1t}\right\Vert
                 +\left\Vert P_{2}u_{2}\right\Vert \left\Vert P_{2}u_{2t}\right\Vert
                 +b(t+t_{0})]^{2}~.
    \end{eqnarray*}
    For the mixed terms we use the inequalities
    \begin{equation*}
        2\left\Vert P_{1}u_{1}\right\Vert \left\Vert P_{1}u_{1t}\right\Vert\left
                 \Vert P_{2}u_{2}\right\Vert \left\Vert P_{2}u_{2t}\right\Vert
            \leq  \left\Vert P_{1}u_{1}\right\Vert^{2}\left\Vert P_{2}u_{2t}\right\Vert^{2}
            + \left\Vert P_{2}u_{2}\right\Vert^{2}\left\Vert P_{1}u_{1t}\right\Vert^{2}
    \end{equation*}
    and
    \begin{equation*}
        2\left\Vert P_{i}u_{i}\right\Vert \left\Vert P_{i}u_{it}\right\Vert (t+t_{0})
            \leq  \left\Vert P_{i}u_{i}\right\Vert^{2}+\left\Vert P_{i}u_{it}\right\Vert^{2} (t+t_{0})^{2},~~~~i=1,2,
    \end{equation*}
    to obtain
    \begin{equation*}
        (\Phi^{\prime}(t))^{2} \leq 4 \Phi(t)(\left\Vert P_{1}u_{1t}\right\Vert^{2}
                                    + \left\Vert P_{2}u_{2t}\right\Vert^{2}+b)~.
    \end{equation*}
    Therefore,
    \begin{eqnarray*}
     && \!\!\!\!\!\!\!\!\!\!\!\!\!\!\!\!\!\!\!\!\!\!\!\!\!\!\!\!
      \Phi (t)\Phi ^{\prime \prime}(t) - (1+\nu)(\Phi^{\prime}(t))^{2}   \\
        \!\!\!\!\!\!\!\!\!\!\!\!\!\!\!\!\!\!\!\!\!\!\!\!\!\!
        &&\geq ~\Phi (t)[-2(1+2\nu )E(0)+2b+4(1+\nu )(\left\Vert P_{1}u_{1t}\right\Vert^{2}+\left\Vert P_{2}u_{2t}\right\Vert ^{2})] \\
        && ~~~~~~~~~
        -4(1+\nu )\Phi (t)(\left\Vert P_{1}u_{1t}\right\Vert ^{2}+\left\Vert P_{2}u_{2t}\right\Vert ^{2}+b) \\
        && =-2(1+2\nu )(E(0)+b)\Phi(t)~.
    \end{eqnarray*}
    If we choose $b\leq -E(0)$, then $\Phi (t)\Phi^{\prime \prime }(t)-(1+\nu)(\Phi^{\prime}(t))^{2}\geq 0$.
    Moreover
    \begin{equation*}
    \Phi'(0)=2\left\langle P_1\varphi_1,P_1\psi_1\right\rangle
        +2\left\langle P_2\varphi_2,P_2\psi_2\right\rangle+2bt_0\geq 0
    \end{equation*}
    for sufficiently large $t_{0}$. According to Lemma
    \ref{blow-up}, we observe that $\Phi (t)$ blows up in finite time. This  contradicts with the assumption of the
    existence of a global solution.
\end{proof}
\begin{remark}
The proof above implies that we may have blow-up even if $~E(0)>0$. In this case, all we need is to be able to choose
$b$ and $t_{0}$ so that
$\Phi \left( 0\right) >0$ and $\Phi ^{\prime }\left( 0\right) >0$. To shorten the notation put
\begin{equation*}
    A=\left\langle P_{1}\varphi _{1},P_{1}\psi _{1}\right\rangle+\left\langle P_{2}
     \varphi _{2},P_{2}\psi _{2}\right\rangle,~~~~~
    B=\left\Vert P_{1}\varphi_{1}\right\Vert ^{2}+\left\Vert P_{2}\varphi_{2}\right\Vert^{2}.
\end{equation*}
When $E(0)>0$,  by choosing $b=-E\left( 0\right) $ we still get blow up if there is some $t_{0}$ so that
initial data satisfies
\begin{equation*}
    A-E(0) t_{0}>0, ~~~~~B-E(0) t_{0}^{2}>0.
\end{equation*}
When  $A>0$, taking  $t_{0}=0$  works. When  $ A \leq 0$, then $t_{0}$ must be
chosen negative. The two inequalities can be rewritten as
\begin{equation*}
  E(0)^{-2}A^{2}<t_{0}^{2},~~~~~   t_{0}^{2}<E(0)^{-1}B.
\end{equation*}
Such a $t_{0}$ exists if and only if $A^{2}<E(0)B$. Hence there is blow-up if the initial data satisfies
\begin{equation*}
    \left( \left\langle P_{1}\varphi_{1},P_{1}\psi _{1}\right\rangle
    +\left\langle P_{2}\varphi_{2},P_{2}\psi_{2}\right\rangle\right) ^{2}<E\left( 0\right)
    \left( \left\Vert P_{1}\varphi_{1}\right\Vert ^{2}+\left\Vert P_{2}\varphi_{2}\right\Vert^{2}\right).
\end{equation*}

\end{remark}

\setcounter{equation}{0}
\section{Global Existence}
\noindent
Below we prove  global existence of solutions of (\ref{cp1})-(\ref{iv2}) for two different classes of kernel functions.
We note that  the kernel functions corresponding to these two particular cases belong to the classes of kernel functions
mentioned in Remark \ref{extension}. Thus, in the cases below, the local existence result of Theorem \ref{theo4}, and
hence Theorems \ref{global1} and \ref{global2}  can be extended to $s\geq 0$ for initial data in $H^{s}\cap L^{\infty}$.

\subsection{Sufficiently Smooth Kernels: $r_{1}, r_{2}>3$}
\noindent
We will now consider kernels $\beta_{i}$  $(i=1,2)$ that satisfy the estimate
$0 \leq  \widehat{\beta_{i}}(\xi) \leq C_{i}(1+\xi ^{2})^{-r_{i}/2}$ with $r_{i}>3$.
Typically if $\beta_{i}$ belongs to the Sobolev space  $W^{3,1}( {\Bbb R}) $ (i.e. $\beta_{i} $ and its derivatives  up to third-order are in $L^{1}$);
then we would get the estimate with $r_{i}=3$; hence we consider kernels that are slightly smoother than
those in $W^{3,1}({\Bbb R})$.

\begin{theorem}\label{global1}
    Let $~s>1/2~$, $~r_{1}, r_{2}> 3$. Let $\varphi _{i},\psi _{i}\in H^{s}$, $P_{i}\psi _{i}\in L^{2}$
    $(i=1,2) $ and $G(\varphi_{1},\varphi_{2})\in L^{1}$. If there is some $k>0$ so that
    $G(a,b)\geq -k(a^{2}+b^{2})$ for all $a,b\in {\Bbb R}$,
    then the Cauchy problem (\ref{cp1})-(\ref{iv2}) has a global solution with $u_{1}$ and $u_{2}$ in
    $C^{2}(\left[ 0,\infty \right) ,H^{s})$.
\end{theorem}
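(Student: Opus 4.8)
The plan is to use the blow-up alternative of Lemma~\ref{lem9}: since $s>1/2$, the solution is global as soon as one shows that $\Vert U(t)\Vert_{\infty}$ stays bounded on every finite interval $[0,T]$. The only input needed is the conserved energy of Lemma~\ref{energy}, which is available here because $P_{i}\psi_{i}\in L^{2}$ and $G(\varphi_{1},\varphi_{2})\in L^{1}$ (and the regularity $P_{i}u_{it}\in C^{1}([0,T),L^{2})$ is guaranteed by Lemma~\ref{ek}). Set $\rho(t)=\Vert u_{1}(t)\Vert^{2}+\Vert u_{2}(t)\Vert^{2}$. The hypothesis $G(a,b)\geq -k(a^{2}+b^{2})$ gives $2\int_{{\Bbb R}}G(u_{1},u_{2})\,dx\geq -2k\,\rho(t)$, so conservation of $E$ yields
\begin{equation*}
  \Vert P_{1}u_{1t}(t)\Vert^{2}+\Vert P_{2}u_{2t}(t)\Vert^{2}\;\leq\; E(0)+2k\,\rho(t),\qquad t\in[0,T).
\end{equation*}

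The key point is that, when $r_{i}>3$, the (possibly unbounded) operator $P_{i}$ still dominates a Sobolev norm of positive order of its argument. From $\widehat{\beta_{i}}(\xi)\leq C_{i}(1+\xi^{2})^{-r_{i}/2}$ one gets $\vert\xi\vert^{-2}\,\widehat{\beta_{i}}(\xi)^{-1}\geq C_{i}^{-1}\vert\xi\vert^{-2}(1+\xi^{2})^{r_{i}/2}$, and the elementary inequality $\vert\xi\vert^{-2}(1+\xi^{2})^{r_{i}/2}\geq c\,(1+\xi^{2})^{(r_{i}/2)-1}$, proved by treating $\vert\xi\vert<1$ and $\vert\xi\vert\geq 1$ separately, then gives $\Vert P_{i}w\Vert^{2}\geq c_{i}\,\Vert w\Vert_{(r_{i}/2)-1}^{2}$. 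Because $r_{i}>3$ forces $(r_{i}/2)-1>1/2$, the Sobolev embedding $H^{(r_{i}/2)-1}\subset L^{\infty}$ gives $\Vert w\Vert_{\infty}\leq C\Vert w\Vert_{(r_{i}/2)-1}\leq C\Vert P_{i}w\Vert$; in particular both $\Vert u_{it}(t)\Vert$ and $\Vert u_{it}(t)\Vert_{\infty}$ are bounded by a constant times $(E(0)+2k\,\rho(t))^{1/2}$.

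Next I would close a Gronwall estimate for $\rho$. Since $u_{i}\in C^{1}([0,T),H^{s})$, one has $\rho'(t)=2\langle u_{1},u_{1t}\rangle+2\langle u_{2},u_{2t}\rangle\leq 2\,\rho(t)^{1/2}(\Vert u_{1t}\Vert^{2}+\Vert u_{2t}\Vert^{2})^{1/2}\leq C\,\rho(t)^{1/2}(E(0)+2k\,\rho(t))^{1/2}\leq a\,\rho(t)+b$ for constants $a,b\geq 0$ depending only on $k$, $E(0)$ and the embedding constant. Gronwall's lemma bounds $\rho$ on $[0,T]$, hence $\Vert P_{i}u_{it}(t)\Vert$ is bounded there, hence so is $\Vert u_{it}(t)\Vert_{\infty}$. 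Finally, integrating $u_{it}$ in time,
\begin{equation*}
  \Vert u_{i}(t)\Vert_{\infty}\;\leq\;\Vert\varphi_{i}\Vert_{\infty}+\int_{0}^{T}\Vert u_{it}(\tau)\Vert_{\infty}\,d\tau\;<\;\infty,
\end{equation*}
using $\varphi_{i}\in H^{s}\subset L^{\infty}$. Thus $\Vert U(t)\Vert_{\infty}$ is bounded on $[0,T]$ and Lemma~\ref{lem9} delivers the global solution in $C^{2}([0,\infty),H^{s})$.

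The main obstacle is the coercivity estimate $\Vert P_{i}w\Vert^{2}\geq c_{i}\Vert w\Vert_{(r_{i}/2)-1}^{2}$ and its use via Sobolev embedding: one must be careful that $P_{i}$ is generally unbounded and has a symbol singular at $\xi=0$, so the low-frequency part has to be absorbed separately before the high frequencies supply the gain of $(r_{i}/2)-1>1/2$ derivatives --- this is exactly where $r_{i}>3$ enters. A secondary subtlety is that, because $k$ may be large, $\rho(t)$ is \emph{not} bounded outright by the energy; only the exponential-in-$t$ bound coming from Gronwall is available, which still suffices since global existence requires only boundedness on each finite interval.
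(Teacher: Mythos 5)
Your proof is correct and follows essentially the same route as the paper: conservation of the energy of Lemma~\ref{energy} together with the coercivity bound $\left\Vert P_{i}w\right\Vert^{2}\geq C_{i}^{-1}\left\Vert w\right\Vert_{r_{i}/2-1}^{2}$ (where $r_{i}>3$ makes $H^{r_{i}/2-1}\subset L^{\infty}$ available), a Gronwall argument, and the $L^{\infty}$ continuation criterion of Lemma~\ref{lem9}. The only minor differences are cosmetic: you run Gronwall on $\left\Vert u_{1}\right\Vert^{2}+\left\Vert u_{2}\right\Vert^{2}$ and recover $\left\Vert u_{i}\right\Vert_{\infty}$ by bounding $\left\Vert u_{it}\right\Vert_{\infty}\leq C\left\Vert P_{i}u_{it}\right\Vert$ and integrating in time, whereas the paper runs Gronwall directly on $\left\Vert u_{1}\right\Vert_{r_{1}/2-1}^{2}+\left\Vert u_{2}\right\Vert_{r_{2}/2-1}^{2}$ and embeds $u_{i}$ itself; also your separate treatment of low and high frequencies is unnecessary, since $\left\vert \xi\right\vert^{-2}\geq(1+\xi^{2})^{-1}$ gives the stated inequality with constant one, exactly as in the paper's estimate (\ref{pa}).
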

\begin{proof}
    Since $~r_{1}, r_{2}> 3$, by Theorem \ref{theo4} we have local existence.  The hypothesis implies
    that $E( 0) <\infty $. Assume that $u_{1}, u_{2}$ exist on $\left[ 0,T\right)$ for some $T>0$.
    Since $G( u_{1},u_{2}) \geq -k(u_{1}^{2}+u_{2}^{2})$, we get
    for all $t\in \left[ 0,T\right) $
    \begin{equation}
        \left\Vert P_{1}u_{1t}( t) \right\Vert ^{2}+\left\Vert P_{2}u_{2t}( t) \right\Vert^{2}\leq E(0)
            +(2k-1)(\left\Vert u_{1}( t) \right\Vert ^{2}+\left\Vert u_{2}(t)\right\Vert^{2}).  \label{ea}
    \end{equation}
    Noting that \ $\widehat{\beta _{i}}(\xi )\leq C_{i}(1+\xi ^{2})^{-r_{i}/2}$ for $i=1,2$; we have
    \begin{eqnarray}
        \left\Vert P_{1}u_{1t}(t)\right\Vert ^{2}
        = \left\Vert \widehat{P_{1}u}_{1t}(t)\right\Vert ^{2}
        &=& \int \xi^{-2}( \widehat{\beta}_{1}(\xi))^{-1}( \widehat{u}_{1t}(\xi ,t))^{2}d\xi   \nonumber \\
        &\geq & C^{-1}_{1}\int \xi ^{-2}(1+\xi ^{2})^{r_{1}/2}( \widehat{u}_{1t}( \xi ,t))^{2}d\xi   \nonumber \\
        &\geq & C^{-1}_{1}\int (1+\xi ^{2})^{( r_{1}-2) /2}(\widehat{u}_{1t}( \xi ,t) ) ^{2}d\xi   \nonumber \\
        & = & C^{-1}_{1}\left\Vert u_{1t}(t)\right\Vert_{\rho_{1}}^{2}~, \label{pa}
    \end{eqnarray}
    and similarly,
    \begin{equation}
        \left\Vert P_{2}u_{2t}(t)\right\Vert ^{2}\geq C^{-1}_{2}\left\Vert u_{2t}(t)\right\Vert _{\rho_{2}}^{2}  \label{pa2}
    \end{equation}
    where  $\rho_{i}=\frac{r_{i}}{2}-1$, $~i=1,2$.
    By the triangle inequality, for any Banach space valued differentiable function $w$ we have
    \begin{equation*}
        \frac{d}{dt}\left\Vert w( t) \right\Vert \leq \left\Vert \frac{d}{dt}w(t) \right\Vert ~.
    \end{equation*}
    Combining (\ref{ea}), (\ref{pa}) and (\ref{pa2}),
    \begin{eqnarray*}
        \frac{d}{dt}(\left\Vert u_{1}(t)\right\Vert_{\rho_{1}}^{2}&+&\left\Vert u_{2}(t)\right\Vert_{\rho_{2}}^{2}) \\
        &=& 2(\left\Vert u_{1}( t) \right\Vert _{\rho_{1}}\frac{d}{dt}\left\Vert u_{1}( t) \right\Vert _{\rho_{1}}
            +\left\Vert u_{2}(t)\right\Vert_{\rho_{2}}\frac{d}{dt}\left\Vert u_{2}(t)\right\Vert_{\rho_{2}}) \\
        &\leq & 2(\left\Vert u_{1t}( t) \right\Vert _{\rho_{1}}\left\Vert u_{1}( t) \right\Vert _{\rho_{1}}
            +\left\Vert u_{2t}( t) \right\Vert _{\rho_{2}}\left\Vert u_{2}( t) \right\Vert _{\rho_{2}}) \\
        &\leq & \left\Vert u_{1t}( t) \right\Vert _{\rho_{1}}^{2}+\left\Vert u_{1}( t) \right\Vert _{\rho_{1}}^{2}
            +\left\Vert u_{2t}( t) \right\Vert _{\rho_{2}}^{2}+\left\Vert u_{2}(t) \right\Vert _{\rho_{2}}^{2} \\
        &\leq & C(\left\Vert P_{1}u_{1t}( t) \right\Vert ^{2}+\left\Vert P_{2}u_{2t}(t) \right\Vert ^{2})
            +\left\Vert u_{1}( t)\right\Vert_{\rho_{1}}^{2}+\left\Vert u_{2}(t)\right\Vert _{\rho_{2}}^{2} \\
        &\leq & C[ E(0)+(2k-1)(\left\Vert u_{1}( t) \right\Vert^{2}+\left\Vert u_{2}(t) \right\Vert ^{2})]
        + \left\Vert u_{1}( t) \right\Vert_{\rho_{1}}^{2}+\left\Vert u_{2}(t) \right\Vert_{\rho_{2}}^{2} \\
        &\leq & CE(0)+(C( 2k-1)+1) (\left\Vert u_{1}( t)\right\Vert _{\rho_{1}}^{2}
        +\left\Vert u_{2}(t)\right\Vert _{\rho_{2}}^{2})
    \end{eqnarray*}
    where $C=\max (C_{1},C_{2})$.
    Gronwall's lemma implies that $\left\Vert u_{1}( t) \right\Vert_{\rho_{1}}+\left\Vert u_{2}( t) \right\Vert _{\rho_{2}}$
    stays bounded in $\left[ 0,T\right) $. Since $\rho_{i}=\frac{r_{i}}{2}-1>\frac{1}{2}$,
    $\left\Vert u_{1}(t)\right\Vert _{\infty}+\left\Vert u_{2}(t)\right\Vert _{\infty }$ also stays bounded in $\left[ 0,T\right) $.
     By Lemma \ref{lem9}, a global solution exists.
\end{proof}

\subsection{Kernels with Singularity}
\noindent
In the next theorem we will consider kernels of the form
$\beta_{1}(x)=\beta_{2}(x)=\gamma(\left\vert x\right\vert ) $ where
$\gamma\in C^{2}(\left[ 0,\infty \right) )$, $\gamma(0)>0$, $\gamma^{\prime }(0)<0$
and $\gamma^{\prime \prime }\in L^{1}\cap L^{\infty }$.
Then the $\beta_{i}$ will have a jump in the first derivative. The typical
example we have in mind is the Green's function
$\frac{1}{2}e^{-\left\vert x\right\vert }$. For such kernels we have
\begin{equation*}
\widehat{\beta }_{i}\left( \xi \right) \leq C_{i}\left( 1+\xi ^{2}\right)^{-1}
\end{equation*}
so $r_{1}=r_{2}=2.$ Due to the jump in $\beta_{i}^{\prime}$ at $x=0$, the distributional derivative will satisfy
\begin{equation*}
\beta_{i}^{\prime\prime}=\gamma^{\prime \prime }+2\gamma^{\prime}(0)\delta,~~~~i=1,2
\end{equation*}
where $\delta$ is the Dirac measure and we use the notation
$\gamma^{\prime \prime }( x) =\gamma^{\prime\prime } (\left\vert x\right\vert )$.
Then we have
\begin{equation*}
(\beta _{i}\ast w)_{xx}=\gamma^{\prime \prime }\ast w-\lambda w,~~~~i=1,2
\end{equation*}
where $\lambda=-2\gamma^{\prime }(0)>0$.
We will call these type of kernels \textit{mildly singular}. For such kernels we extend the global
existence result in \cite{constantin3} to the coupled system.
\begin{theorem}\label{global2}
    Let $s>1/2$ and let the kernels $\beta_{1}=\beta_{2}$ be mildly singular as defined above.
    Suppose that
    $\varphi_{1},\varphi_{2},\psi_{1},\psi_{2}\in H^{s}$, $P_{1}\psi_{1},P_{2}\psi_{2}\in L^{2}$
    and
    $G(\varphi_{1},\varphi_{2})\in L^{1}$. If there is some $C>0$, $k \geq 0$ and $q_{i}>1$ so that
    \begin{equation*}
    \left\vert g_{i}(a,b) \right\vert ^{q_{i}}\leq C [G(a,b)+k(a^{2}+b^{2})]
    \end{equation*}
    for all $a,b\in \mathbb{R}$ and $i=1,2$;
    then the Cauchy problem (\ref{cp1})-(\ref{iv2}) has a global solution with $u_{1}$ and $u_{2}$  in
    $C^{2}(\left[ 0,\infty\right),H^{s})$.
\end{theorem}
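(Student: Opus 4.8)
The plan is to reduce global existence to the boundedness of $\|U(t)\|_\infty$ via Lemma \ref{lem9}, and to control that norm by exploiting the conserved energy $E(t)$ from Lemma \ref{energy} together with the special structure $(\beta_i\ast w)_{xx}=\gamma''\ast w-\lambda w$ of mildly singular kernels. First I would record that since $\widehat{\beta}_i(\xi)\le C_i(1+\xi^2)^{-1}$, the operator $P_i^2=-(\beta_i\ast(\cdot))_{xx}$ satisfies $\|P_iw\|^2=\int \xi^{-2}\widehat{\beta}_i(\xi)^{-1}|\widehat w|^2\,d\xi\ge C_i^{-1}\|w\|^2$; hence the kinetic part of the energy dominates $\|u_{it}\|_{L^2}^2$. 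From the energy identity, for $t$ in the maximal interval of existence,
\begin{equation*}
\|u_{1t}(t)\|^2+\|u_{2t}(t)\|^2+\|u_1(t)\|^2+\|u_2(t)\|^2 \le C\Big(E(0)+2k\big(\|u_1(t)\|^2+\|u_2(t)\|^2\big)\Big),
\end{equation*}
using the lower bound $G(u_1,u_2)\ge -k(u_1^2+u_2^2)$, which follows from the hypothesis $|g_i|^{q_i}\le C[G+k(a^2+b^2)]$ with $q_i>1$ since it forces $G(a,b)+k(a^2+b^2)\ge0$. This is not yet a closed bound because of the $\|u_i(t)\|^2$ on the right, so the next step is to control the $L^2$ norms of $u_i$ themselves.

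The key structural point for mildly singular kernels is that $(\beta_i\ast f_i)_{xx}=\gamma''\ast f_i-\lambda f_i=\gamma''\ast(u_i+g_i)-\lambda(u_i+g_i)$, so equations (\ref{cp1})--(\ref{cp2}) read $u_{itt}=\gamma''\ast(u_i+g_i)-\lambda u_i-\lambda g_i$. I would test this with $u_{it}$ and integrate in $x$: the terms $\langle \gamma''\ast u_i,u_{it}\rangle$, $\langle u_i,u_{it}\rangle$ produce $\frac{d}{dt}$ of quadratic quantities, while $\langle \gamma''\ast g_i,u_{it}\rangle$ and $\langle g_i,u_{it}\rangle$ must be estimated. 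Here I use $\gamma''\in L^1$, so $\|\gamma''\ast g_i\|\le\|\gamma''\|_{L^1}\|g_i\|$, and the crucial Hölder-type step: the hypothesis gives $\|g_i\|_{L^{q_i}}^{q_i}=\int|g_i|^{q_i}\le C\int[G+k(u_1^2+u_2^2)]$, and the right side is controlled by $E(t)$-type quantities which are themselves bounded by $\|u_i\|^2$ and $\|u_{it}\|^2$ — but to turn an $L^{q_i}$ bound on $g_i$ into an $L^2$ bound usable in the inner product I would interpolate, using $q_i>1$ and an a priori $L^\infty$ bound on $u_i$ over any fixed finite subinterval, or rather bootstrap: on $[0,T']$ with $T'<T_{\max}$ finite, $\|U\|_\infty$ is already finite by continuity, giving $\|g_i\|\le \|g_i\|_{L^{q_i}}^{q_i/2}\|g_i\|_\infty^{1-q_i/2}$-type control. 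Assembling the differential inequality for $N(t):=\|u_1\|^2+\|u_2\|^2+\|u_{1t}\|^2+\|u_{2t}\|^2$ and a suitable potential piece, I expect $N'(t)\le a+bN(t)$, so Gronwall keeps $N$ finite on every finite interval; combined with the energy bound this keeps $\|u_i(t)\|$, $\|u_{it}(t)\|$ bounded.

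Finally, once $\|u_i(t)\|_{L^2}$ and $\|u_{it}(t)\|_{L^2}$ are bounded on finite intervals, I would upgrade to an $H^s$ (or at least $L^\infty$) bound. From the integral representation (\ref{uint}), $u_i(t)=\varphi_i+t\psi_i+\int_0^t(t-\tau)[\gamma''\ast f_i-\lambda f_i](\tau)\,d\tau$; in $L^2$ the right side is controlled by $\int_0^t\|f_i(\tau)\|\,d\tau$ which is bounded by what we just proved (plus the $L^2$ bound on $g_i$). But $L^2$ control of $u_i$ does not directly give $L^\infty$; the resolution, exactly as in \cite{constantin3}, is that the gain of one derivative is absent here, so instead I would run the $H^s$ energy estimate: apply $\langle\,\cdot\,,\rangle_s$ or use Lemma \ref{lem1} to get $\|(\beta_i\ast f_i)_{xx}(\tau)\|_s\le C\|f_i(\tau)\|_s\le CA(M)\|u_i(\tau)\|_s$ with $M=\sup_{[0,T']}\|U\|_\infty$, which via (\ref{uint})--(\ref{utint}) and Gronwall bounds $\|U(t)\|_s+\|U_t(t)\|_s$ — but this is circular unless $M$ is already known finite, which it is on $[0,T']$ for any $T'<T_{\max}$. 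So the actual logical order is: energy + the $q_i>1$ hypothesis bound $\|u_i\|_{L^2}$, $\|u_{it}\|_{L^2}$ globally; then a Brezis–Gallouët / interpolation or the Sobolev structure bounds $\|U\|_\infty$ globally; then Lemma \ref{lem9} finishes. The main obstacle is precisely this last conversion from $L^2$-level control to the $L^\infty$ bound required by Lemma \ref{lem9}: the mildly singular kernel provides no smoothing, so I expect the argument to rely on the nonlinearity's sign/growth condition to directly bound $\int G(u_1,u_2)$ and thereby, through the energy, an $H^{\rho}$-norm with $\rho=r_i/2-1$ — which here equals $0$, so unlike Theorem \ref{global1} the embedding $H^\rho\hookrightarrow L^\infty$ fails and one genuinely needs the extra integrability exponent $q_i>1$ to close the estimate, most likely via a logarithmic Sobolev-type inequality controlling $\|U\|_\infty$ by $\|U\|_s$ logarithmically and Gronwall on $\log$.
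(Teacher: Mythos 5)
You have assembled the right ingredients (the reduction to an $L^\infty$ bound via Lemma \ref{lem9}, the conservation law of Lemma \ref{energy}, the decomposition $(\beta_i\ast w)_{xx}=\gamma''\ast w-\lambda w$, and the fact that the hypothesis with $q_i>1$ gives an $L^{q_i}$ bound on $g_i$ in terms of $E(0)$), but the proposal does not close at exactly the point you yourself flag as the main obstacle: passing from $L^2$-level control to the $L^\infty$ bound that Lemma \ref{lem9} requires. The substitutes you suggest do not work. Interpolating $\Vert g_i\Vert \leq \Vert g_i\Vert_{L^{q_i}}^{q_i/2}\Vert g_i\Vert_\infty^{1-q_i/2}$ is circular, since $\Vert g_i\Vert_\infty$ is controlled only through the unknown $\Vert U\Vert_\infty$; bootstrapping on $[0,T']$ with $T'<T_{\max}$ is vacuous, because finiteness of $\Vert U\Vert_\infty$ on compact subintervals of the existence interval is automatic and what is needed is a bound uniform as $t\to T^-$; and the closing suggestion of a Brezis--Gallou\'et/logarithmic Sobolev step is only speculation (and is not promising here, since, as you note, the energy controls no norm stronger than $L^2$ when $r_i=2$). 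So the argument as proposed has a genuine gap precisely at its decisive step.

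The paper's resolution, which is the missing idea, is a \emph{pointwise-in-$x$} energy estimate rather than an integrated one. Fix $x$ and set
\begin{equation*}
e(t)=\tfrac{1}{2}\bigl[(u_{1t})^{2}+(u_{2t})^{2}\bigr]+\tfrac{\lambda}{2}\bigl[(u_{1})^{2}+(u_{2})^{2}+2G(u_1,u_2)\bigr](x,t).
\end{equation*}
Differentiating and using $u_{itt}=\gamma''\ast(u_i+g_i)-\lambda(u_i+g_i)$, the local terms $-\lambda(u_i+g_i)u_{it}$ cancel exactly against the derivative of the pointwise potential (this is where the exactness condition $g_i=\partial G/\partial u_i$ enters), leaving only $(\gamma''\ast(u_i+g_i))\,u_{it}$. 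These nonlocal terms are bounded in $L^\infty$ by Young's inequality: $\Vert\gamma''\ast u_i\Vert_\infty\leq\Vert\gamma''\Vert_{L^2}\Vert u_i\Vert_{L^2}$ and $\Vert\gamma''\ast g_i\Vert_\infty\leq\Vert\gamma''\Vert_{L^{p_i}}\Vert g_i\Vert_{L^{q_i}}$ with $1/p_i+1/q_i=1$ (this is exactly where $q_i>1$ and $\gamma''\in L^1\cap L^\infty$ are needed), and both right-hand sides are bounded uniformly in $t$ by the conserved energy $E(0)$ through the growth hypothesis on $g_i$. This yields $e'(t)\leq D+2e(t)$ for every $x$ and $t$, and Gronwall gives a bound on $e(t)$, hence on $u_1(x,t)$ and $u_2(x,t)$, uniform in $x$ and on bounded time intervals; Lemma \ref{lem9} then concludes. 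Your integrated ($L^2$) energy estimate never produces this pointwise information, which is why your argument cannot reach the hypothesis of Lemma \ref{lem9} without the pointwise construction.
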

\begin{proof}
    By Theorem \ref{theo4} we have a local solution. Suppose the solution $(u_{1},u_{2})$  exists for
    $t\in \left[0,T\right)$. For fixed $x\in \mathbb{R}$ we define
    \begin{equation*}
     e( t) =\frac{1}{2}[(u_{1t}(x,t))^{2}+(u_{2t}(x,t))^{2}]+ \frac{\lambda}{2}[(u_{1}(x,t))^{2}
        +(u_{2}(x,t))^{2}+2G(u_{1}(x,t),u_{2}(x,t))] .
    \end{equation*}
    Then
    \begin{eqnarray*}
    e^{\prime }(t) &=& [u_{1tt}+\lambda ( u_{1}+g_{1}(u_{1},u_{2}))] u_{1t}
        +[ u_{2tt}+\lambda (u_{2}+g_{2}( u_{1},u_{2}) ) ] u_{2t} \\
    &=& [(\beta _{1}\ast ( u_{1}+g_{1}(u_{1},u_{2})))_{xx}+\lambda ( u_{1}+g_{1}( u_{1},u_{2}))]u_{1t} \\
    &~& + ~[(\beta _{2}\ast ( u_{2}+g_{2}(u_{1},u_{2})))_{xx}+\lambda ( u_{2}+g_{2}( u_{1},u_{2}))] u_{2t} \\
    &=& (\gamma^{\prime \prime }\ast u_{1}) u_{1t}+(\gamma^{\prime \prime }\ast g_{1}(u_{1},u_{2})) u_{1t}
            +(\gamma^{\prime \prime }\ast u_{2}) u_{2t}+( \gamma^{\prime \prime }\ast g_{2}( u_{1},u_{2})) u_{2t} \\
   &\leq &  (u_{1t})^{2}+(u_{2t})^{2}
             +\frac{1}{2}( \left\Vert \gamma^{\prime \prime }\ast u_{1}\right\Vert _{\infty }^{2}
             +\left\Vert \gamma^{\prime \prime }\ast u_{2}\right\Vert_{\infty }^{2}) \\
   &~&  + ~\frac{1}{2}(\left\Vert \gamma^{\prime \prime }\ast g_{1}( u_{1},u_{2})\right\Vert _{\infty }^{2}
            +\left\Vert \gamma^{\prime \prime }\ast g_{2}(u_{1},u_{2}) \right\Vert _{\infty }^{2}).
    \end{eqnarray*}
    Since $\gamma^{\prime \prime }\in L^{1}\cap L^{\infty }$  we have $\gamma^{\prime \prime }\in L^{p}$ for all
    $p\geq 1$. By Young's inequality
    \begin{eqnarray*}
       e^{\prime }(t)
        & \leq & (u_{1t})^{2}+(u_{2t})^{2}+\frac{1}{2}  \left\Vert \gamma^{\prime \prime }\right\Vert ^{2}
          (\left\Vert u_{1}\right\Vert ^{2}+\left\Vert u_{2}\right\Vert ^{2}) \\
        &~& + ~ \frac{1}{2}\left\Vert \gamma^{\prime \prime }\right\Vert _{L^{p_{1}}}^{2}\left\Vert g_{1}( u_{1},u_{2})\right\Vert_{L^{q_{1}}}^{2}
            +\frac{1}{2}\left\Vert \gamma^{\prime \prime }\right\Vert_{L^{p_{2}}}^{2}\left\Vert g_{2}( u_{1},u_{2}) \right\Vert_{L^{q_{2}}}^{2} ,
    \end{eqnarray*}%
    where $1/p_{i}+1/q_{i}=1$ $(i=1,2)$. Now the  terms may be estimated as
    \begin{equation*}
    \left\Vert u_{1}\right\Vert^{2}+\left\Vert u_{2}\right\Vert^{2}\leq E(0)
    \end{equation*}
     and for $i=1,2$
    \begin{eqnarray*}
     \left\Vert g_{i}( u_{1},u_{2}) \right\Vert _{L^{q_{i}}}^{2}
        &=&\left( \int \left\vert g_{i}( u_{1},u_{2}) \right\vert^{q_{i}}dx\right)^{2/q_{i}} \\
     & \leq &\left( C\int [G(u_{1},u_{2})+k(a^{2}+b^{2})] dx\right)^{2/q_{i}}
        \leq [ C(1+k)E( 0)]^{2/q_{i}}
    \end{eqnarray*}%
    so that
    \begin{equation*}
        e^{\prime }( t) \leq D+2e( t)
    \end{equation*}
    for some constant $D$ depending on $\left\Vert \gamma^{\prime \prime }\right\Vert_{L^{p_{i}}}$,
    $\left\Vert \gamma^{\prime \prime }\right\Vert $ and $E( 0) $ $(i=1,2)$.
    This inequality holds for all $x\in \mathbb{R}$, $t\in \left[0,T\right)$. Gronwall's lemma then implies that
    $e( t) $ and thus $u_{1}(x,t) $ and $u_{2}( x,t) $ stay bounded. Thus by Lemma \ref{lem9} we have global solution.
\end{proof}
\vspace*{20pt}

\noindent
{\bf Acknowledgement}: This work has been supported by the Scientific and Technological Research Council of
Turkey (TUBITAK) under the project TBAG-110R002.


\begin{thebibliography}{99}

\bibitem{duruk1} N. Duruk, H. A. Erbay, A. Erkip,
                    Global existence and blow-up for a class of nonlocal nonlinear Cauchy problems arising
                    in elasticity, Nonlinearity  23 (2010) 107-118.

\bibitem{duruk2} N. Duruk, A. Erkip, H. A. Erbay,
                    A higher-order Boussinesq equation in locally nonlinear theory of one-dimensional nonlocal
                    elasticity, IMA Journal of Applied Mathematics 74 (2009) 97-106.

\bibitem{haddow} J. B. Haddow, H. A. Erbay,
                    Some aspects of finite amplitude transverse waves in a compressible hyperelastic solid,
                    Quarterly Journal of Mechanics and Applied Mathematics 55 (2002) 17-28.


\bibitem{constantin1} A. Constantin, J. Escher, Wave braking for nonlinear nonlocal shallow water equations,
                    Acta Mathematica 181 (1998) 229-243.

\bibitem{christiansen} P. L. Christiansen, P. S. Lomdahl, V. Muto,
                    On a Toda lattice model with a transversal degree of freedom, Nonlinearity 4 (1991) 477-501.

\bibitem{khusnutdinova} K. R. Khusnutdinova, A. M. Samsonov, A. S. Zakharov, Nonlinear layered lattice model and
                 generalized solitary waves in imperfectly bonded structures, Physical Review E 79 (2009)
                 Article Number: 056606.

\bibitem{wattis} J. A. D. Wattis, Solitary waves in a diatomic lattice: analytic approximations for a wide range
                 of speeds by quasi-continuum methods, Physics Letters A 284 (2001) 16-22.

\bibitem{alvarez} B. Alvarez-Samaniego, D. Lannes, Large time existence for 3D water-waves and asymptotics,
                    Inventiones Mathematicae 171 (2008) 485-541.

\bibitem{constantin2} A. Constantin, D. Lannes, The hydrodynamical relevance of the Camassa-Holm and Degasperis-Procesi
                    equations, Archive for Rational Mechanics and Analysis 192 (2009) 165-186.

\bibitem{godefroy} A. De Godefroy, Blow up of solutions of a generalized Boussinesq equation,
                   IMA Journal of Applied Mathematics 60 (1998) 123-138.


\bibitem{wang} S. Wang, M. Li, The Cauchy problem for coupled IMBq equations,
                  IMA Journal of Applied Mathematics 74 (2009) 726-740.

\bibitem{rosenau} P. Rosenau, Dynamics of dense discrete systems, Progress of Theoretical Physics 79 (1988)
                    1028-1042.

\bibitem{schneider} G. Schneider, C. E. Wayne, Kawahara dynamics in dispersive media, Physica D
                        152 (2001) 384-394.

\bibitem{constantin3} A. Constantin, L. Molinet, The initial value problem for a generalized Boussinesq
                    equation, Differential and Integral Equations 15 (2002) 1061-1072.

\bibitem{runst} T. Runst, W. Sickel, Sobolev Spaces of Fractional Order, Nemytskij Operators, and
                     Nonlinear Partial Differential Equations, Walter de Gruyter, Berlin, 1996.

\bibitem{ladas} G. Ladas, V. Lakshmikantham, Differential Equations in Abstract Spaces,
                 Academic Press, New York, 1974.

\bibitem{varga} V. K. Kalantarov, O. A. Ladyzhenskaya, The occurence of collapse for quasilinear equation
                of parabolic and hyperbolic types, Journal of Soviet Mathematics 10 (1978) 53-70.

\bibitem{levine} H. A. Levine, Instability and nonexistence of global solutions to nonlinear wave equations of the
               form $Pu_{tt}=-Au+f(u)$, Transactions of American Mathematical Society 192 (1974) 1-21.

\end{thebibliography}
\end{document}